\theoremstyle{plain} %% This is the default
\newtheorem{theorem}{Theorem}
\newtheorem{lemma}[theorem]{Lemma}
\newtheorem{corollary}[theorem]{Corollary}
\newtheorem*{claim*}{Claim}
\newtheorem{problem}{Problem}
\newtheorem{conjecture}[theorem]{Conjecture}
\newtheorem*{conjecture*}{Conjecture}
\theoremstyle{definition}
\theoremstyle{remark}   
\newtheorem*{remark*}{Remark}
\theoremstyle{remark}
\newcommand{\Hext}{H_{\textrm{ext}}}
\newcommand{\calHext}{\mathcal{H}_{\textrm{ext}}}
\newcommand{\Cext}{C_{\textrm{ext}}}
\newcommand{\Dext}{D_{\textrm{ext}}}
\newcommand{\floor}[1]{\left\lfloor #1 \right\rfloor}
\newcommand{\ceil}[1]{\left\lceil #1 \right\rceil}
\def\rank{\operatorname{rank}}
\newcommand{\N}{\mathbb{N}}
\newcommand{\eps}{\varepsilon}
\newcommand{\midtilde}[1]{\mkern 1.5mu\widetilde{\mkern-1.5mu#1\mkern-1.5mu}\mkern 1.5mu}
\tikzstyle{vertex}=[circle, draw, fill=black, inner sep=0pt, minimum width=4pt]
\title{Reconstruction from smaller cards}
\author{Carla Groenland\footnote{Delft Institute of Applied Mathematics, Technische Universiteit Delft, 2628 CD Delft, Netherlands, \texttt{c.e.groenland@tudelft.nl}.}\quad
Tom Johnston\footnote{Mathematical Institute, University of Oxford, Oxford OX2 6GG, UK\\ \texttt{tom.johnston@bristol.ac.uk,  scott@maths.ox.ac.uk, jane.tan@maths.ox.ac.uk}}\quad
Alex Scott\protect\footnotemark[2] \footnote{Supported by EPSRC grant EP/V007327/1.}\quad
Jane Tan\protect\footnotemark[2]}
\date{}
\begin{document}

\maketitle

\begin{abstract}
The $\ell$-deck of a graph $G$ is the multiset of all induced subgraphs of $G$ on $\ell$ vertices. 
We say that a graph is reconstructible from its $\ell$-deck if no other graph has the same $\ell$-deck.  In 1957, Kelly showed that every tree with $n\ge3$ vertices can be reconstructed from its $(n-1)$-deck, and Giles strengthened this in 1976, proving that trees on at least 6 vertices can be reconstructed from their $(n-2)$-decks.  Our main theorem states that trees are reconstructible from their $(n-r)$-decks for all $r\le n/{9}+o(n)$, making substantial progress towards a conjecture of N\'ydl from 1990. In addition, we can recognise the connectedness of a graph from its $\ell$-deck when $\ell\ge 9n/10$, and reconstruct the degree sequence when $\ell\ge\sqrt{2n\log(2n)}$.
All of these results are significant improvements on previous bounds.
\end{abstract}

\section{Introduction}

Throughout this paper, all graphs are finite and undirected with no loops or multiple edges. 
Given a graph $G$ and any vertex $v\in V(G)$, the \textit{card} $G-v$ is the subgraph of $G$ obtained by removing the vertex $v$ together with all edges incident to $v$. The \emph{deck} $\mathcal{D}(G)$ is then the multiset of all unlabelled cards of $G$. 
A graph $G$ is said to be \emph{reconstructible} from its deck if any graph with the same deck is isomorphic to $G$. 

The graph reconstruction conjecture of Kelly and Ulam~\cite{K42,K57,U60} states that all graphs on at least three vertices are reconstructible. While this classical conjecture has been verified for certain classes such as trees~(Kelly \cite{K57}), outerplanar graphs~(Giles \cite{Giles74}) and maximal planar graphs~(Lauri \cite{Lauri81}), it remains open even for simple classes of graphs such as planar graphs and graphs of bounded maximum degree. However, various graph parameters, such as the degree sequence and connectedness, are known to be \textit{reconstructible} for general graphs in the sense that they are determined by the deck (i.e.~if two graphs have the same deck, then the parameter takes the same value for both).

There is a significant body of research on the problem of reconstructing graphs and graph parameters from smaller cards: instead of taking induced subgraphs on $n-1$ vertices, it is natural to consider cards which are the induced subgraphs on $\ell$ vertices where $\ell$ may be much smaller than $n-1$. The \emph{$\ell$-deck} of $G$, denoted by $\mathcal{D}_\ell(G)$, is the multiset of all induced subgraphs of $G$ on $\ell$ vertices (in this notation $\mathcal{D}(G)=\mathcal{D}_{n-1}(G)$).  
A graph or graph parameter is \emph{reconstructible from the $\ell$-deck} if it is determined by the $\ell$-deck. 

Intuitively, individual cards that are smaller carry less information. Indeed, the $(\ell-1)$-deck is determined by the $\ell$-deck for each $\ell$, as can be shown by a simple counting argument (see Lemma \ref{lem:Kelly}). Thus, if a graph is reconstructible from its $\ell'$-deck then it is reconstructible from its $\ell$-deck for all $\ell\geq \ell'$. The main question is then to determine the threshold; that is, to determine the smallest $\ell$ for which a given class of graphs or a property is reconstructible from the $\ell$-deck. 

Reconstruction from small cards is generally attributed to Kelly, although the strengthening of the Reconstruction Conjecture that follows seems to be formulated by Manvel (who calls it ``Kelly's Conjecture'').
\begin{conjecture}[\cite{K57, Manvel74}]
For every $r \in \mathbb{N}$, there is an integer $N_r$ such that every graph with at least $N_r$ vertices is reconstructible from its $(n-r)$-deck.
\end{conjecture}
Kelly and Ulam's conjecture posits that $N_1=3$. This stronger conjecture did not receive much attention until 1974 when it was studied by Manvel \cite{Manvel74}, who showed that several classes of graphs, such as connected graphs, trees, regular graphs and bipartite graphs, can be \emph{recognised} from the $(n-2)$-deck where $n\geq 6$ is the number of vertices (that is, if $G$ and $H$ are graphs with $n$ vertices and the same $(n-2)$-deck, then either both graphs or neither belong to the class).  Since then, recognition and reconstruction problems of this type have been widely studied. Recent developments include the reconstructibility of 3-regular $n$-vertex graphs from the $(n-2)$-deck~(Kostochka, Nahvi, West and Zirlin \cite{kostochka20193}) and that almost all graphs are reconstructible from $\binom{r + 2}{2}$ specially chosen cards from the $(n - r)$-deck when $r \leq (1/2 - o(1))n$ (Spinoza and West \cite{SW19}, building on results of M\"uller \cite{M76} and Bollob\'as \cite{Boll90}). For further background, we refer to the survey of Kostochka and West~\cite{KW21}.

For general graphs, it is not possible to guarantee reconstructibility from the $(n-r)$-deck unless $r=o(n)$, as shown by the following theorem of N\'{y}dl.
\begin{theorem}[N\'{y}dl \cite{Nydl92}]\label{n1}
For any integer $n_0$ and $0<\alpha<1$, there exists an integer $n > n_0$ such that there are two non-isomorphic graphs on $n$ vertices which share the same multiset of subgraphs of order at most $\alpha n$.
\end{theorem}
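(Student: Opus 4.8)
The plan is to reduce the statement to one about $\ell$-decks and then exhibit an explicit family of pairs. For a graph $G$ and a graph $J$ on $\ell$ vertices, the number of (not necessarily induced) subgraphs of $G$ isomorphic to $J$ equals $\sum_{J'} c(J,J')\cdot(\text{number of induced copies of }J'\text{ in }G)$, where $J'$ ranges over graphs on $\ell$ vertices containing $J$ as a spanning subgraph and $c(J,J')$ is the (fixed) number of such spanning subgraphs; ordering $\ell$-vertex graphs by number of edges makes $\bigl(c(J,J')\bigr)$ upper triangular with $1$'s on the diagonal, hence invertible. Therefore two graphs have the same multiset of subgraphs of order at most $m$ if and only if they have the same induced-subgraph multiset of order $\ell$ for every $\ell\le m$, i.e. $\mathcal{D}_\ell(G)=\mathcal{D}_\ell(G')$ for all $\ell\le m$, and since $\mathcal{D}_m$ determines $\mathcal{D}_\ell$ for $\ell\le m$ this is equivalent to $\mathcal{D}_m(G)=\mathcal{D}_m(G')$. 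So it suffices to find, for each $0<\alpha<1$ and each $n_0$, one value $n>n_0$ and two non-isomorphic $n$-vertex graphs with the same $\floor{\alpha n}$-deck.

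The key structural fact is that $\ell$-decks behave multiplicatively under disjoint union. Encode a graph $G$ by the power series $\Phi(G)$ in indeterminates $x_C$ indexed by connected graphs $C$, whose coefficient of $\prod_C x_C^{a_C}$ is the number of vertex subsets $S\subseteq V(G)$ for which $G[S]$ has exactly $a_C$ components isomorphic to $C$ for every $C$; since each subset of $V(G_1\sqcup G_2)$ splits uniquely across the two parts and the components merge, $\Phi(G_1\sqcup G_2)=\Phi(G_1)\Phi(G_2)$, and $\mathcal{D}_\ell(G)=\mathcal{D}_\ell(G')$ precisely when $\Phi(G)$ and $\Phi(G')$ agree modulo the ideal of monomials of weighted degree $>\ell$ (where $x_C$ has weight $|V(C)|$). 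I would then take cycles as the gadgets. I claim $\mathcal{D}_\ell(C_n)=\mathcal{D}_\ell(C_a\sqcup C_b)$ whenever $a+b=n$ and $\ell<\min(a,b)$: under this hypothesis every induced subgraph on at most $\ell$ vertices of either graph is a disjoint union of paths, so it is enough to match, for each forest $F$ that is a disjoint union of paths with $|V(F)|\le \ell$, the number of induced copies of $F$. A short "cut the cycle" count shows that the number of induced copies of a union of $t$ paths with $s$ vertices in $C_m$ is $\tfrac{1}{\prod_j m_j!}\,m(m-s-1)(m-s-2)\cdots(m-s-t+1)$, with $m_j$ the number of components on $j$ vertices; expanding $\Phi(C_a)\Phi(C_b)$ and collecting coefficients, the cross terms form a binomial-type convolution that collapses to the value at $a+b$, giving the identity. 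Taking $a=b=n/2$ and $\ell=\floor{\alpha n}$ then proves the theorem for all $\alpha<\tfrac12$.

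For $\alpha\ge\tfrac12$ a richer construction is needed: here the global feature distinguishing $G$ from $G'$ must involve more than $\ell=\alpha n$ vertices while the graph has only about $\ell/\alpha$ vertices in total, so one cannot afford to split off a separate long piece as above. I would attempt this by replacing cycles with denser rotationally symmetric graphs — powers $C_n^{(d)}$ of cycles, or more general circulant graphs — for which the same argument shows $\Phi$ is, up to the relevant truncation, "exponential" in the number of vertices as long as no $\ell$-subset wraps all the way around the circle, and then by choosing the connection set so as to push this wrap-around threshold as close to $n$ as possible; an alternative is to iterate the cycle construction inside a common dense scaffold via the join/lexicographic operations, for which $\Phi$ also behaves algebraically. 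I expect the main obstacle to be exactly this last point: producing a genuine collision of $\ell$-decks while keeping the number of vertices within a constant factor of $\ell$, since the naive ways of adding room (extra components, padding by isolated vertices or cliques, joins) all tend to shrink the attainable ratio $\ell/n$ rather than enlarge it, so the construction must keep essentially all vertices "working".
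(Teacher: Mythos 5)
Your reduction to $\ell$-decks is fine (it is exactly the Kelly's Lemma/M\"obius-inversion equivalence the paper also uses), and the collision $\mathcal{D}_\ell(C_{a+b})=\mathcal{D}_\ell(C_a\sqcup C_b)$ for $\ell<\min(a,b)$ is correct and essentially the known maximum-degree-two examples of Spinoza and West cited in the introduction. But this only proves the theorem for $\alpha<\tfrac12$, and the statement you are asked to prove is for every $0<\alpha<1$. The regime $\alpha\ge\tfrac12$ is not a loose end you can wave at: it is the entire content of N\'ydl's theorem beyond easy examples, and your sketch explicitly stops at speculation there. Moreover, the avenues you propose cannot cross the $\tfrac12$ barrier. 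Any pair whose decks agree because each graph decomposes into pieces that are individually too large to fit on a card forces both pieces to have more than $\ell$ vertices, hence $n>2\ell$, i.e.\ $\alpha<\tfrac12$; joins and complements buy nothing because $\mathcal{D}_\ell(\obar{G})$ is determined by $\mathcal{D}_\ell(G)$ (complement each card), so a join-based collision is just a disjoint-union collision in disguise; and passing to powers of cycles or general circulants makes the relevant ``wrap-around'' threshold worse rather than better, since sparse induced subgraphs on far fewer than $n/2$ vertices already distinguish $C_n^{(d)}$ from a disjoint union of two smaller such graphs. So the argument as proposed would fail for, say, $\alpha=0.9$, which is precisely the range relevant to this paper.

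For what it is worth, the paper itself does not prove this statement; it is quoted from N\'ydl \cite{Nydl92}, whose construction for $\alpha$ close to $1$ is of a genuinely different nature and does not proceed by splitting the graph into large independent pieces. To turn your proposal into a proof you would need to supply that missing construction (or an alternative one) for all $\alpha<1$, not only refine the cycle computation; the part you have written, while correct, recovers only the easy half of the theorem.
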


However, 
N\'ydl's theorem may not hold for specific families of graphs 
such as the class of trees. In fact, N\'{y}dl conjectured in 1990 that no two non-isomorphic trees have the same $\ell$-deck when $\ell$ is slightly larger than $n/2$.

\begin{conjecture}[N\'ydl \cite{nydl1990note}]
\label{conj:trees}
For any $n\geq 4$ and $\ell \geq \lfloor n/2 \rfloor+1$, any two trees on $n$ vertices with the same $\ell$-deck are isomorphic.
\end{conjecture}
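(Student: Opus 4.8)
Since this statement is a conjecture that (as the abstract makes clear) is still open, what follows is an attack strategy rather than a complete proof; it splits into the reconstruction direction and the sharpness direction, and I will flag where the real difficulty sits. For the reconstruction direction, the first move is to pass from decks to subtree statistics. A standard Kelly--Kocay counting argument shows that $\mathcal{D}_\ell(T)$ determines $\mathcal{D}_{\ell'}(T)$ for every $\ell'\le \ell$ (each $\ell'$-set lies in a fixed number of $\ell$-sets), hence the number of induced copies of every graph $H$ with $|V(H)|\le \ell$, and then a further inclusion--exclusion over edge-supersets on the same vertex set recovers the number $s(T,S)$ of subgraph copies of $S$ for every graph $S$ on at most $\ell$ vertices; since $T$ is a tree the only relevant $S$ are forests, and in particular we obtain $s(T,S)$ for every \emph{tree} $S$ with $|V(S)|\le \lfloor n/2\rfloor+1$. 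So it suffices to prove that, among $n$-vertex trees, the map $S\mapsto s(T,S)$ on trees $S$ of order at most $\lfloor n/2\rfloor+1$ is a complete invariant. From this data one first extracts coarse structure: the order $n$ from the deck size; the degree sequence, which is already reconstructible once $\ell\ge\sqrt{2n\log(2n)}$ and hence here for all large $n$ (the finitely many small $n$ being checkable directly); and, from the counts $s(T,P_k)$ of paths, the full distance distribution and in particular the diameter $D$.

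The proposed engine is then a peeling induction driven by a longest path $v_0v_1\cdots v_D$. Deleting the interior of this path leaves rooted subtrees hanging off the $v_i$, each of order at most $n-D$, so when $D\ge n/2$ every such hanging subtree fits inside a single card. The plan is to recover these hanging subtrees and their attachment points by evaluating $s(T,\cdot)$ on ``lollipop'' test trees -- a path of prescribed length with a fixed small subtree attached at one end -- peel off an identified pendant subtree, argue inductively that the $\ell$-deck of the remainder is determined, and terminate when what is left is a path. The hard part will be trees of small diameter: when $D=o(n)$ every splitting of $T$ into two pieces yields a piece of order close to $n/2$ that is never fully contained in any card, so the hanging subtrees are invisible individually and the peeling argument collapses. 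One is forced instead to compare the two ``halves'' of $T$ around its centroid using only subtree counts of order $\le \lfloor n/2\rfloor+1$, and it is exactly this bushy regime that currently limits the main theorem to $\ell\ge(8/9+o(1))n$; closing the gap to $\lfloor n/2\rfloor+1$ would, I expect, require a genuinely new complete invariant -- say a tree generating function provably recoverable from the small-subtree counts -- and this is the central obstacle, so the proposal honestly stops short here.

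For sharpness one must exhibit, for every large $n$, two non-isomorphic $n$-vertex trees with identical $\lfloor n/2\rfloor$-decks, which is the content of the construction in N\'ydl's note; the template to aim for is as follows. Take two rooted trees $(R_1,r_1)$ and $(R_2,r_2)$, each on roughly $n/2$ vertices, that are indistinguishable by their rooted $k$-decks for every $k$ up to their order, and attach each by its root to a common core at a single vertex $u$, chosen so that the two resulting unrooted trees $T_1,T_2$ are non-isomorphic. Because $|R_i|\le\lceil n/2\rceil$ and a card has only $\lfloor n/2\rfloor$ vertices, any $\lfloor n/2\rfloor$-subset of $V(T_i)$ fails to contain all of the glued copy, so its induced subgraph is insensitive to which of $R_1,R_2$ was used; verifying $\mathcal{D}_{\lfloor n/2\rfloor}(T_1)=\mathcal{D}_{\lfloor n/2\rfloor}(T_2)$ then reduces to a finite case analysis on how an $\lfloor n/2\rfloor$-set distributes across $u$ and across the core. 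One should separately confirm the small cases $4\le n\le n_0$ by hand or computer, since the asymptotic construction will not apply in that range.
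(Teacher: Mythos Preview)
The paper contains no proof of this statement because it is a conjecture, and in fact the paper explicitly records that it is \emph{false} as stated: for $n=13$ the two trees in Figure~\ref{fig:counter_example} are non-isomorphic but share the same $7$-deck, with $7=\lfloor 13/2\rfloor+1$. Your proposal misses this. You write that ``one should separately confirm the small cases $4\le n\le n_0$ by hand or computer,'' but the computer check fails at $n=13$; the exact threshold $\lfloor n/2\rfloor+1$ cannot hold for all $n\ge 4$. Any serious attack must therefore aim only for an asymptotic statement (or for all sufficiently large $n$), and your write-up should say so up front.

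On the reconstruction side there is no paper proof to compare against, but it is worth noting how your outline relates to the paper's proof of the weaker Theorem~\ref{thm:trees}. You propose a high/low diameter split with threshold roughly $D\ge n/2$, peeling pendant subtrees off a longest path via lollipop counts; the paper also splits on diameter but with a different threshold ($k>4\sqrt{\ell}+2(n-\ell)$), and in the high-diameter regime it does not peel but instead locates a bridge whose two sides each have small enough $1$-balls that a leaf-extension count (Lemma~\ref{lem:bridge}) lets one glue them back together. More importantly, the paper does \emph{not} abandon the low-diameter case: that is where most of the work happens, via a Bondy--Hemminger maximal-subgraph count that reconstructs the branches at the (possibly shifted) centre (Lemmas~\ref{lemma:allsmall} and~\ref{lemma:oneheavy}). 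Your proposal treats low diameter as the unresolved obstacle and stops, which is an honest assessment of the conjecture, but it undersells how far one can actually push: the paper's branch-counting argument handles low diameter down to $\ell\approx 8n/9$, and it is the interaction between the two regimes---not low diameter alone---that sets the current barrier.

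Your sketch of the sharpness direction is in line with what the paper cites from N\'ydl, and the $\lfloor n/2\rfloor$ lower bound is not in dispute.
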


The conjectured bound would be sharp: N\'ydl \cite{nydl1990note} presented trees for which $\ell\ge \lfloor n/2 \rfloor+1$ is necessary (see~\cite{KW21} for a short proof).

There has been no progress on N\'ydl's conjecture since it was made in \cite{nydl1990note}.  Indeed, the best previous result is an earlier bound of Giles \cite{giles1976reconstructing} from 1976, which states that for $n \geq 5$ no two non-isomorphic $n$-vertex trees have the same $(n-2)$-deck. Using the result of Manvel \cite{Manvel74} that the class of $n$-vertex trees is recognisable from the $(n - 2)$-deck when $n \geq 6$, Giles' result confirms that trees can be reconstructed from their $(n-2)$-deck for all $n \geq 6$.

Our main theorem improves very substantially on the result of Giles and takes a significant step towards Conjecture \ref{conj:trees}, showing that we can reconstruct trees from the $(n-r)$-deck for $r$ with linear size. 

\begin{restatable}{theorem}{trees}
\label{thm:trees}
Any $n$-vertex tree $T$ can be reconstructed from $\mathcal{D}_{n-r}(T)$ when $r < \frac{n}{9}-\frac{4}{9}\sqrt{8n+5}-1$.
\end{restatable}

In particular, it follows that N\'ydl's theorem (Theorem \ref{n1}) does not hold when restricted to the class of trees.
We remark that Conjecture \ref{conj:trees} is false in the case $n=13$, as demonstrated by the two graphs in Figure \ref{fig:counter_example} which have been verified to have the same deck by computer. However, our computer search has also shown that the conjecture is true for all other values $4 \leq n\leq 25$, and it remains open for large $n$.

\begin{figure}
\centering
\begin{subfigure}{0.5 \textwidth}
    \centering
    \scalebox{0.85}{
    \begin{tikzpicture}[xscale=0.7]
        \draw node[style=vertex](0) at (0,0) {};
        \draw node[style=vertex](1) at (1,0) {};
        \draw node[style=vertex](2) at (2,0) {};
        \draw node[style=vertex](3) at (3,0) {};
        \draw node[style=vertex](4) at (4,0) {};
        \draw node[style=vertex](5) at (5,0) {};
        \draw node[style=vertex](6) at (6,0) {};
        \draw node[style=vertex](7) at (7,0) {};
        \draw node[style=vertex](8) at (8,0) {};
        
        \draw node(9)[style=vertex] at (2,-0.7) {};
        \draw node(10)[style=vertex] at (3,-0.7) {};
        \draw node(11)[style=vertex] at (3,-1.4) {};
        \draw node(12)[style=vertex] at (5,-0.7) {};
        
        \draw (0) -- (1);
        \draw (1) -- (2);
        \draw (2) -- (3);
        \draw (3) -- (4);
        \draw (4) -- (5);
        \draw (5) -- (6);
        \draw (6) -- (7);
        \draw (7) -- (8);
        \draw (2) -- (9);
        \draw (3) -- (10);
        \draw (10) -- (11);
        \draw (5) -- (12);
    \end{tikzpicture}}
\end{subfigure}%
\begin{subfigure}{0.5 \textwidth}
    \centering
    \scalebox{0.85}{\begin{tikzpicture}[xscale=0.7]
        \draw node[style=vertex](0) at (0,0) {};
        \draw node[style=vertex](1) at (1,0) {};
        \draw node[style=vertex](2) at (2,0) {};
        \draw node[style=vertex](3) at (3,0) {};
        \draw node[style=vertex](4) at (4,0) {};
        \draw node[style=vertex](5) at (5,0) {};
        \draw node[style=vertex](6) at (6,0) {};
        \draw node[style=vertex](7) at (7,0) {};
        \draw node[style=vertex](8) at (8,0) {};
        
        \draw node(9)[style=vertex] at (2,-0.7) {};
        \draw node(10)[style=vertex] at (4,-0.7) {};
        \draw node(11)[style=vertex] at (5,-0.7) {};
        \draw node(12)[style=vertex] at (5,-1.4) {};
        
        \draw (0) -- (1);
        \draw (1) -- (2);
        \draw (2) -- (3);
        \draw (3) -- (4);
        \draw (4) -- (5);
        \draw (5) -- (6);
        \draw (6) -- (7);
        \draw (7) -- (8);
        \draw (2) -- (9);
        \draw (4) -- (10);
        \draw (5) -- (11);
        \draw (11) -- (12);
    \end{tikzpicture}}
\end{subfigure}

    \caption{Two non-isomorphic trees on 13 vertices which have the same 7-deck.}
    \label{fig:counter_example}
\end{figure}
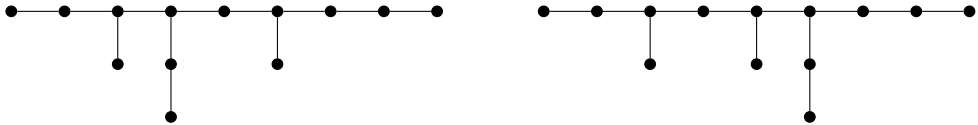

It is worth noting that the class of trees, being one of the first non-trivial classes shown to be reconstructible in the classical sense, is very prominent in reconstruction literature. For example, assuming we know \emph{a priori} that the graph is a tree, Harary and Palmer \cite{harary1966reconstruction} showed how to recover a tree using only the cards that are subtrees, Bondy \cite{bondy1969kelly} showed that only the cards where peripheral vertices have been removed are needed and Manvel \cite{manvel1970reconstruction} subsequently showed that the set (as opposed to the multiset) of cards that are trees suffices (except in four cases). Lauri \cite{lauri1983proof} also showed that trees with at least three cutvertices can be reconstructed (amongst all graphs) from the cards corresponding to removing a cutvertex. Indeed, Myrvold \cite{myrvold1990ally} proved that only three carefully chosen cards are needed to reconstruct a tree when $n \geq 5$. Related problems have also been investigated extensively for infinite trees (see, for example \cite{HSS72,BH74,T78,A81,N91,A94}), and it was recently shown by Bowler, Erde, Heinig, Lehner and Pitz that there are non-reconstructible locally finite trees \cite{BEHL17}.

Returning to the small cards setting, we have already mentioned Manvel's result in~\cite{Manvel74} that the class of connected graphs is recognisable from the $(n-2)$-deck for $n\geq 6$. Extending this, Kostochka, Nahvi, West, and Zirlin \cite{KNWZdc} showed that the connectedness of a graph on $n\geq 7$ vertices is determined by $\mathcal{D}_{n-3}(G)$. As shown by Spinoza and West \cite{SW19}, if we take $G_1=P_{n}$ (the path on $n$ vertices) and $G_2=C_{\lceil n/2 \rceil + 1}\sqcup P_{\lfloor n/2 \rfloor -1}$ the disjoint union of a cycle and a path, we find $\mathcal{D}_{\ell}(G_1)=\mathcal{D}_{\ell}(G_2)$ for all $\ell \leq \lfloor n/2 \rfloor$. However, $G_1$ is connected and $G_2$ is not. In light of this construction, Spinoza and West believe that for $n \geq 6$ and $\ell \geq \lfloor n/2 \rfloor+1$, the connectedness of an $n$-vertex graph $G$ is determined by $\mathcal{D}_\ell(G)$. This threshold would be sharp.

Spinoza and West proved in \cite{SW19} that connectedness can be recognised from $\mathcal{D}_\ell (G)$ provided 
\[
n-\ell \leq (1 + o(1))\sqrt{ \frac{2\log n}{\log (\log n)}}.
\]
We significantly improve this bound to allow a linear gap between $n$ and $\ell$.
\begin{restatable}{theorem}{connectednesssmallcards}
\label{thm:connectedness_small_cards}
The connectedness of an $n$-vertex graph $G$ can be recognised from $\mathcal{D}_\ell(G)$ provided $\ell \geq  9n/10$.\end{restatable}

By Theorem \ref{thm:connectedness_small_cards} (and the fact that we can reconstruct the number of edges), we can recognise trees from the $\ell$-deck when $\ell\geq 9n/10$.
In order to prove Theorem \ref{thm:trees}, we need a slightly stronger bound.

\begin{restatable}{theorem}{treerecog}
\label{thm:treerecog}
For $\ell \geq (2n + 4)/3$, the class of trees on $n$ vertices is recognisable from the $\ell$-deck.
\end{restatable}

As we were completing this paper, Kostochka, Nahvi, West and Zirlin \cite{Kostochkaea21} independently announced a similar result to Theorem \ref{thm:treerecog}. In fact, they proved that one can recognise if a graph is acyclic from the $\ell$-deck when $\ell \geq \floor{n/2}+1$, which also verifies the believed bound for reconstructing connectedness in the special case of forests. This has the particularly nice consequence that trees can be recognised from their $\ell$-deck, and so Conjecture~\ref{conj:trees} is equivalent to the reconstruction of trees amongst general graphs. Since our proof of Theorem~\ref{thm:treerecog} is short and already (more than) sufficient for our purpose of reconstructing trees, we have retained it to keep the proof of our main result self-contained.

The proof of Theorem \ref{thm:connectedness_small_cards} relies on an algebraic result (Lemma \ref{lem:moments}) which we also apply to reconstructing degree sequences. The story in the literature here is similar to that for connectedness. Chernyak \cite{Cher82} showed that the degree sequence of an $n$-vertex graph can be reconstructed from its $(n-2)$-deck for $n\geq 6$, and this was later extended by Kostochka, Nahvi, West, and Zirlin \cite{KNWZdc} to the $(n-3)$-deck for $n\geq 7$. 
The best known asymptotic result is due to Taylor \cite{T90}, 
and implies that the degree sequence of a graph $G$ on $n$ vertices can be reconstructed from $\mathcal{D}_\ell(G)$ where $\ell \sim (1 - 1/e) n$.
Our improved bound is as follows.

\begin{restatable}{theorem}{degreesmallcards}
\label{thm:degree_small_cards}
The degree sequence of an $n$-vertex graph $G$ can reconstructed from $\mathcal{D}_\ell(G)$ for any $\ell \geq\sqrt{2n\log(2n)}$.
\end{restatable}

In Section~\ref{sec:prelim}, we give $\ell$-deck versions of both Kelly's Lemma \cite{K57} and a result on counting maximal subgraphs by Greenwell and Hemminger \cite{GH73}, as well as an algebraic result of Borwein and Ingalls \cite{BI94} bounding the number of moments shared by two distinct sequences. These are used to deduce Theorem \ref{thm:degree_small_cards} (Section~\ref{sec:degree}) and Theorem \ref{thm:connectedness_small_cards} (Section~\ref{sec:conn}). Our main result on reconstructing trees, Theorem~\ref{thm:trees}, is proved in Section~\ref{sec:trees} which also contains a new counting tool for reconstruction that may be of independent interest as well as the proof of Theorem~\ref{thm:treerecog}. We conclude with some further discussion in Section~\ref{sec:end}.

\section{Preliminaries}
\label{sec:prelim}
This paper makes extensive use of three key results which we give in this section.

\subsection{Kelly's Lemma}

Let $\midtilde{n}_H(G)$ and $n_H(G)$ denote the number of subgraphs and induced subgraphs of $G$ isomorphic to $H$ respectively. We will reserve the word \emph{copy} of $H$ for an induced subgraph isomorphic to $H$, and say an \emph{instance} of $H$ to mean not necessarily induced.

In the classical graph reconstruction problem, Kelly's Lemma states that we can reconstruct $n_H(G)$ and $\midtilde{n}_H(G)$ provided $|V(H)| < |V(G)|$, and there are many variants of the lemma for other reconstruction problems~(see \cite{B91}). We use the following variant. 

\begin{lemma}
\label{lem:Kelly}
Let $\ell\in \N$ and let $H$ be a graph on at most $\ell$ vertices. For any graph $G$, the multiset of $\ell$-vertex induced subgraphs of $G$ determines both the number of subgraphs of $G$ that are isomorphic to $H$ and the number of induced subgraphs that are isomorphic to $H$.
\end{lemma}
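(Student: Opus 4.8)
The plan is to adapt the classical double-counting argument of Kelly to the $\ell$-deck setting. I will prove the statement for the number of subgraphs $\midtilde{n}_H(G)$; the statement for induced subgraphs $n_H(G)$ will then follow, either by an analogous argument or by standard inclusion–exclusion over supergraphs of $H$ on the same vertex set (this inversion is purely combinatorial and uses only the counts $\midtilde{n}_{H'}(G)$ for graphs $H'$ on exactly $|V(H)|$ vertices, which we have just reconstructed).

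First I would fix $H$ with $h := |V(H)| \le \ell$ and count, in two ways, the number $N$ of pairs $(A, S)$ where $S$ is an $\ell$-subset of $V(G)$ and $A$ is a subgraph of $G[S]$ isomorphic to $H$. Summing over the cards, $N = \sum_{S} \midtilde{n}_H(G[S])$, which is manifestly determined by the $\ell$-deck, since each summand depends only on the isomorphism type of the card $G[S]$ and the multiset of these types is exactly $\mathcal D_\ell(G)$. On the other hand, summing over the copies of $H$ in $G$ first: each subgraph $A \cong H$ of $G$ sits inside $G[S]$ precisely when $V(A) \subseteq S$, and the number of $\ell$-subsets $S$ of $V(G)$ containing a fixed $h$-subset $V(A)$ is $\binom{n-h}{\ell-h}$, which depends only on $n = |V(G)|$, $h$ and $\ell$. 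Hence $N = \binom{n - h}{\ell - h}\,\midtilde{n}_H(G)$.

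Since $h \le \ell \le n$ (the $\ell$-deck is only defined, and the claim only meaningful, when $\ell \le n$; and $n$ itself is recoverable from the deck, e.g.\ from the size of any card together with the sizes appearing, or simply taken as given), the binomial coefficient $\binom{n-h}{\ell-h}$ is a positive integer, so we may divide and obtain $\midtilde{n}_H(G) = N / \binom{n-h}{\ell-h}$, which is therefore determined by $\mathcal D_\ell(G)$. For the induced count, I would run the same two-way count with "induced subgraph isomorphic to $H$" in place of "subgraph isomorphic to $H$": a copy of $H$ in $G$ is an induced subgraph of $G[S]$ exactly when its vertex set lies in $S$, and induced-ness is inherited, so the identical computation gives $n_H(G) = \big(\sum_S n_H(G[S])\big)/\binom{n-h}{\ell-h}$.

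The only point requiring any care is the edge case where $H$ has exactly $\ell$ vertices: then $\binom{n-h}{\ell-h} = \binom{n-\ell}{0} = 1$, the count $N$ is literally the multiplicity of $H$ in the $\ell$-deck (for the induced version) and everything still goes through, so there is no genuine obstacle — the argument is uniform in $h \le \ell$. The main thing to state cleanly is simply that both sides of each double count depend on $G$ only through $\mathcal D_\ell(G)$ (and the fixed parameters $n, \ell, h$), which is immediate once the counts are phrased as sums over cards.
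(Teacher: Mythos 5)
Your proof is correct and follows essentially the same double-counting argument as the paper: sum the (induced or non-induced) copies of $H$ over all cards, observe each copy of $G$ is counted exactly $\binom{n-|V(H)|}{\ell-|V(H)|}$ times, and divide. No substantive differences.
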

In particular,  Kelly's Lemma means that $\mathcal{D}_{\ell'}(G)$ can be reconstructed from $\mathcal{D}_\ell(G)$ for all $\ell'\leq \ell$.

Despite its great usefulness, the proof of Kelly's Lemma requires nothing more than elementary counting. Suppose we count the number of copies of $H$ in each of the $\ell$-cards of $G$, and take the sum over all cards. Each copy of $H$ in $G$ will be counted exactly $\binom{n-|V(H)|}{\ell-|V(H)|}$ times toward this total. Hence, we can reconstruct the number $n_H(G)$ of copies of $H$ in $G$ from the $\ell$-deck as
\[
n_H(G)=\binom{n-|V(H)|}{\ell-|V(H)|}^{-1}\sum_{C\in \mathcal{D}_\ell(G)}n_{H}(C).
\]
The same argument applies with instances rather than copies.
Foreshadowing later usage, we remark that Kelly's Lemma only requires the subset of the deck consisting of the cards which contain at least one copy of the fixed graph $H$.

\subsection{Counting maximal subgraphs}\label{subsec:maxcount}
Given a class of graphs $\mathcal{F}$, a subgraph $F'$ of some graph $G$ is said to be an \emph{$\mathcal{F}$-subgraph} if $F'$ is isomorphic to some $F \in \mathcal{F}$, and is a \emph{maximal $\mathcal{F}$-subgraph} if the subgraph $F'$ cannot be extended to a larger $\mathcal{F}$-subgraph, that is, there does not exist an $\mathcal{F}$-subgraph $F''$ of $G$ such that $V(F') \subsetneq V(F'')$.

Let $m(F,G)$ denote the number of maximal $\mathcal{F}$-subgraphs in $G$ which are isomorphic to $F$. We give a slight variation of a classical ``Counting Theorem" due to Bondy and Hemminger \cite{BH77} (see also the statement of Greenwell and Hemminger \cite{GH73}) which reconstructs $m(F,G)$ from the $\ell$-deck.

\begin{lemma}
\label{lem:BH_counting}
Let $n\in \N$, let $\ell\in [n-1]$ and let $\mathcal{G}$ be a class of $n$-vertex graphs. Let $\mathcal{F}$ be a class of graphs such that for any $G\in \mathcal{G}$ and for any $\mathcal{F}$-subgraph $F$ of $G$,
\begin{enumerate}[label=(\roman*)]
\itemsep=0mm
    \item $|V(F)|\leq \ell$;
    \item $F$ is contained in a unique maximal $\mathcal{F}$-subgraph of $G$.
\end{enumerate}
Then for all $F\in \mathcal{F}$ and $G\in \mathcal{G}$, we can reconstruct $m(F,G)$ from the collection of cards in the $\ell$-deck that contain an $\mathcal{F}$-subgraph.
\end{lemma}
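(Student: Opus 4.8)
The plan is to recover $m(F,G)$ by a Möbius-type inversion relating counts of maximal $\mathcal{F}$-subgraphs to counts of ordinary (not necessarily induced) subgraphs, the latter being accessible through Kelly's Lemma.

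First I would note that for every $F\in\mathcal{F}$ with $|V(F)|\le\ell$, the quantity $\midtilde{n}_F(G)$ is determined by the collection of cards we are given. The point is that any subgraph of a card $C$ isomorphic to $F\in\mathcal F$ is in particular an $\mathcal{F}$-subgraph of $C$, so a card containing no $\mathcal{F}$-subgraph contributes nothing to $\sum_{C}\midtilde{n}_F(C)$; hence summing only over the cards that do contain an $\mathcal{F}$-subgraph gives the same total as summing over the whole $\ell$-deck, and Lemma~\ref{lem:Kelly} applies verbatim. By hypothesis~(i), moreover, $m(F,G)=0$ whenever $|V(F)|>\ell$, so such $F$ need no further attention.

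Next comes the key identity. Using hypothesis~(ii), every subgraph $\widetilde F$ of $G$ isomorphic to a fixed $F\in\mathcal{F}$, being an $\mathcal{F}$-subgraph, lies in a unique maximal $\mathcal{F}$-subgraph of $G$, and that maximal subgraph is exactly the unique maximal $\mathcal{F}$-subgraph containing $\widetilde F$. Grouping the copies of $F$ in $G$ according to which maximal $\mathcal{F}$-subgraph contains them, and noting that a maximal $\mathcal{F}$-subgraph isomorphic to $F'$ contains precisely $\midtilde{n}_F(F')$ copies of $F$, I obtain
\[
\midtilde{n}_F(G)=\sum_{F'\in\mathcal{F},\,|V(F')|\le\ell} m(F',G)\,\midtilde{n}_F(F'),
\]
a finite sum, since there are only finitely many isomorphism types on at most $\ell$ vertices. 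To invert this, order isomorphism types by $(|V(\cdot)|,|E(\cdot)|)$ lexicographically, writing $F\prec F'$ for this order: if $\midtilde{n}_F(F')\ne0$ and $F'\not\cong F$ then $F$ embeds into $F'$, so $|V(F)|\le|V(F')|$, with equality forcing $F$ to be a proper spanning subgraph of $F'$ and hence $|E(F)|<|E(F')|$, so $F\prec F'$; meanwhile $\midtilde{n}_F(F)=1$. Thus the linear system is triangular, and processing the (finitely many relevant) types in decreasing lexicographic order lets me solve successively for $m(F,G)=\midtilde{n}_F(G)-\sum_{F'\succ F}m(F',G)\,\midtilde{n}_F(F')$, where every term on the right is already known; the lex-maximal type gives the base case $m(F^\ast,G)=\midtilde{n}_{F^\ast}(G)$.

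I expect the only delicate points to be (a) the bookkeeping that allows Kelly's Lemma to run on just the sub-multiset of cards containing an $\mathcal{F}$-subgraph, which is handled by the zero-contribution observation above, and (b) choosing the right order for the inversion: ordering by number of vertices alone is not enough, since, for example, $F=K_2$ sits inside $F'=K_2\cup K_1$ on the same vertex count, so the number of edges must be used as a tie-breaker to make the system triangular.
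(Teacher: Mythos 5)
Your proof is correct, and it takes a genuinely more self-contained route than the paper's. The paper invokes the chain formula of Bondy and Hemminger (its equation (2.1)), which writes $m(F,G)$ as an alternating sum over isomorphism classes of chains $F\cong X_0\subsetneq\dots\subsetneq X_k$ of products of subgraph counts, and then verifies that each ingredient $\widetilde{n}_{G}(X_k)$ is recoverable from the restricted card collection -- using precisely your observation that a card with no $\mathcal{F}$-subgraph contributes zero to the Kelly sum -- and that one may harmlessly sum over all abstract chains of $\mathcal{F}$-graphs on at most $\ell$ vertices because spurious chains contribute zero. You instead derive directly from hypothesis (ii) the single relation $\widetilde{n}_F(G)=\sum_{F'}m(F',G)\,\widetilde{n}_F(F')$ (grouping copies of $F$ by the unique maximal $\mathcal{F}$-subgraph containing them) and invert it, noting that the system is triangular with unit diagonal under the order by vertex count with edge count as tie-breaker; your tie-breaking is exactly what makes the inversion legitimate, and hypothesis (i) correctly disposes of types on more than $\ell$ vertices. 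Unrolling your triangular solve reproduces the chain formula, so the two arguments perform the same M\"obius-type inversion, but yours avoids citing the Counting Theorem and is arguably simpler, at the cost of giving the answer recursively rather than as an explicit closed formula. One caveat worth stating explicitly: your grouping identity reads ``contained in'' in hypothesis (ii) as subgraph containment, so that the copies of $F$ assigned to a maximal $\mathcal{F}$-subgraph isomorphic to $F'$ are exactly the $\widetilde{n}_F(F')$ subgraphs of it; this is the reading the paper's own argument (and its later applications to branches of trees) also relies on, so it is not a gap, but it deserves a sentence in a polished write-up.
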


The following proof is essentially that of Bondy and Hemminger \cite{BH77}, only with a few additional observations used to accommodate our slight changes to the assumptions. 

\begin{proof}
Define an \textit{$(F,G)$-chain} of length $k$ to be a sequence $(X_0,\dots,X_k)$ of $\mathcal{F}$-subgraphs of $G$ such that
\[
F\cong X_0\subsetneq X_1\subsetneq \dots \subsetneq X_k \subsetneq G.
\]
The \textit{rank} of $F$ in $G$ is the length of a longest $(F,G)$-chain, and two chains are called \emph{isomorphic} if they have the same length and the corresponding terms are isomorphic. Following Bondy and Hemminger's argument, we first show that
\begin{equation}\label{eq:bhmaximal}
    m(F,G) = \sum_{k=0}^{\rank{F}} \sum
    (-1)^k \widetilde{n}_{F}(X_1) \widetilde{n}_{X_1}(X_2) \cdots \widetilde{n}_{X_{k-1}}(X_{k}) \widetilde{n}_{X_k}(G)
\end{equation}
where 
the second summation is over
all non-isomorphic $(F,G)$-chains of length $k$. 
When $\rank{F}=0$, we have $m(F,G)=\widetilde{n}_F(G)$. Let $\rank F=r$, and suppose that \eqref{eq:bhmaximal} holds for all graphs $F\in \mathcal{F}$ with rank less than $r$. The second assumption states that every copy of $F$ has a unique maximal extension $X$,
which implies that
\[
\widetilde{n}_F(G)=\sum_X \widetilde{n}_F(X)m(X,G),
\]
where the sum is over all non-isomorphic $\mathcal{F}$-subgraphs $X$ of $G$. This gives the expression
\[
m(F,G) = \widetilde{n}_F(G) - \sum_{X\not\cong F} \widetilde{n}_F(X)m(X,G).
\]
In the summation, we can restrict to $X$ for which $\widetilde{n}_F(X) >0$. Such a graph $X$ has rank at most $r-1$, so we may apply the induction hypothesis to rewrite each $m(X,G)$-term into a double sum.
The resulting triple sum can be simplified to obtain \eqref{eq:bhmaximal}. 

It now suffices to show that the RHS of \eqref{eq:bhmaximal} is reconstructible. To see this, we note that the inner summation is over $(F,G)$-chains for which $X_k$ has size at most $\ell$ (since $X_k$ is an $\mathcal{F}$-subgraph and by condition (i)), and so all such chains can be seen on cards. The remaining terms can be reconstructed by Kelly's Lemma (again using (i)), and this only requires the cards from $\mathcal{D}_\ell(G)$ that contain an $\mathcal{F}$-subgraph.
\end{proof}

\subsection{Shared moments of sequences}
We will need a bound on the maximum number of shared moments that two sequences $\alpha, \beta \in \{0,\dots, n\}^m$ can have. This result follows from the following theorem on the number of positive real roots of a polynomial. We use $\log$ to mean the natural logarithm here.

\begin{theorem}[Theorem A in \cite{BEK99}]
\label{thm:BEK}
Suppose that the complex polynomial
\[
p(z) := \sum_{j=0}^n a_j z^j
\]
has $k$ positive real roots (counted with multiplicity). Then
\[
k^2 \leq 2n \log\left(\frac{|a_0| + |a_1| + \dots + |a_n|}{\sqrt{|a_0a_n|}}\right).
\]
\end{theorem}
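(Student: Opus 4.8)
The plan is to deduce Theorem~\ref{thm:BEK} from Jensen's formula together with a Chebyshev--Remez-type inequality, the essential point being that one must use that the zeros counted are \emph{real}; a purely complex-analytic count cannot suffice. Write $S := |a_0| + \cdots + |a_n|$ and note that we may assume $a_0, a_n \neq 0$ (otherwise the right-hand side is infinite). Dividing $p$ by a scalar — which changes neither $k$ nor the ratio $S/\sqrt{|a_0 a_n|}$ — we may further assume $a_n = 1$; then $S \ge |a_0| + 1 \ge 2\sqrt{|a_0|} = 2\sqrt{|a_0 a_n|}$, so the bound is always meaningful, and the full multiset of complex zeros $r_1, \dots, r_n$ of $p$ satisfies $\prod_i |r_i| = |a_0|$. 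The key symmetry is reciprocation, $p(z) \mapsto z^n p(1/z) = \sum_j a_{n-j} z^j$, which fixes the multiset $\{|a_j|\}$ — hence $S$ and $|a_0 a_n|$ — and reflects each zero $r_i$ to $1/r_i$ across the unit circle; we use it to balance the positive zeros in $(0,1)$ against those in $(1,\infty)$.

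The analytic input that is easy to obtain is the following. Since $|p(z)| \le S$ whenever $|z| \le 1$ (and, applying the same bound to the reciprocal polynomial, also whenever $|z| \ge 1$), Jensen's formula on the unit disc gives
\[
\sum_{i\,:\,|r_i| < 1} \log\frac{1}{|r_i|} \;=\; \frac{1}{2\pi}\int_0^{2\pi} \log\bigl|p(e^{i\theta})\bigr|\, d\theta \;-\; \log|a_0| \;\le\; \log\frac{S}{|a_0|},
\]
and symmetrically $\sum_{i\,:\,|r_i| > 1} \log|r_i| \le \log S$; restricting to the positive real zeros, $\sum_{x_i < 1}\log(1/x_i) \le \log(S/|a_0|)$ and $\sum_{x_i > 1}\log x_i \le \log S$. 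This controls zeros that lie far from the unit circle, but it is useless against zeros near $1$: a polynomial can have $\Theta(n)$ real zeros — indeed $\Theta(n)$ complex zeros in any neighbourhood of the real segment — clustered just inside the unit circle, for which the summands $\log(1/x_i)$ are negligible. Hence no count that is blind to real-rootedness can beat the trivial bound $k \le n$, and this is precisely the point where the real-variable structure has to enter.

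The missing ingredient, which forces the positive zeros to be spread out, is a Chebyshev/Remez-type extremal inequality. Informally: if $p$ vanishes at $x_1 < \cdots < x_{k_0}$ in $(0,1)$, then writing $p(z) = \prod_{i \le k_0}(z - x_i)\,g(z)$ and using $|p(0)| = |a_0|$ to pin down $|g(0)| = |a_0|/\prod_i x_i \ge |a_0|$, one finds that the node polynomial $\prod_i(z - x_i)$ must be uniformly small on $[0,1]$ relative to the bound $|p| \le S$ there — a ``small on a large set'' constraint of Remez type — which forces $k_0$ to be small, of the expected shape $k_0^2 \lesssim n\log(S/|a_0|)$. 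Running the symmetric argument for the zeros $y_i > 1$ through the reciprocal polynomial, and using the balance $\prod_i |r_i| = |a_0|$ so that the two families of zeros reinforce rather than merely add, should then yield $k^2 \le 2n\log(S/\sqrt{|a_0 a_n|})$, which is the theorem. The main obstacle is exactly this last step: making the Remez-type estimate quantitative with the correct dependence on $\log S$, and organising the bookkeeping between the contributions from $(0,1)$ and from $(1,\infty)$ so that they combine with the precise constant $2n$ rather than something larger.
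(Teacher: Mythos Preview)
The paper does not prove Theorem~\ref{thm:BEK}; it is quoted from \cite{BEK99} as a black box and then used to derive Lemma~\ref{lem:moments}. So there is no proof in the paper to compare against.

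As for the proposal on its own merits: it is, by your own account, a plan with a gap at the decisive step. The Jensen-formula calculation is correct and yields $\sum_{x_i<1}\log(1/x_i)\le\log(S/|a_0|)$ together with the symmetric bound, and you rightly observe that this alone cannot bound the number of positive zeros clustering near $1$. The gap is the ``Remez-type'' step. From the factorisation $p(z)=\prod_{i\le k_0}(z-x_i)\,g(z)$ together with $|p|\le S$ on $[0,1]$ and $|g(0)|\ge|a_0|$, one \emph{cannot} deduce that the node polynomial $\prod_i(z-x_i)$ is uniformly small on $[0,1]$: nothing prevents $g$ from being small precisely where the node polynomial is large, so the product bound $|p|\le S$ gives no information there. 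Turning the single value $|g(0)|\ge|a_0|$ into a lower bound on $|g|$ over a substantial subset of $[0,1]$ --- with the right dependence on $n$ --- is essentially the content of the theorem, not a routine application of Remez. The proof in \cite{BEK99} does hinge on a Chebyshev-type extremal inequality, so your instinct about the flavour is right, but the quantitative step you defer is exactly where all the work lies; as written, this is an outline of the difficulties rather than a proof.
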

This theorem is attributed to Schmidt, but the first published proof is due to Schur and a series of simplifications have followed (see~\cite{BEK99}). The specific application that we require was given by Borwein and Ingalls \cite[Proposition 1]{BI94}. 
We shall use the following formulation which is tailored to our purposes.
\begin{lemma}
\label{lem:moments}
Let $\alpha,\beta \in \{0,\dots,n\}^m$ be two sequences that are not related to each other by a permutation. If
\begin{equation}
\label{eq:binoms}
\binom{\alpha_1}j+\dots +\binom{\alpha_m}j = \binom{\beta_1}j+\dots +\binom{\beta_m}j~~\text{ for all }j\in \{0,\dots,\ell\},
\end{equation}
then $\ell+1\leq \sqrt{2n\log(2m)}$.
\end{lemma}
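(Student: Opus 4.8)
The plan is to convert the hypothesis \eqref{eq:binoms} into a statement about a polynomial having many positive real roots, and then apply Theorem~\ref{thm:BEK}. The key observation is that the ``binomial moments'' $\sum_i \binom{\alpha_i}{j}$ and the ordinary power sums $\sum_i \alpha_i^k$ span the same information: since $\binom{x}{j}$ is a polynomial in $x$ of degree $j$ with rational coefficients, knowing $\sum_i \binom{\alpha_i}{j}$ for all $j \in \{0,\dots,\ell\}$ is equivalent to knowing the power sums $p_k(\alpha) := \sum_i \alpha_i^k$ for all $k \in \{0,\dots,\ell\}$, and similarly for $\beta$. Hence \eqref{eq:binoms} gives $p_k(\alpha) = p_k(\beta)$ for $k = 0,1,\dots,\ell$.

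Now I would encode the two sequences via their generating functions. For a value $v \in \{0,\dots,n\}$, let $c_v$ be the number of indices $i$ with $\alpha_i = v$ minus the number of indices $i$ with $\beta_i = v$; since $\alpha$ and $\beta$ are not permutations of each other, the integer vector $(c_0,\dots,c_n)$ is nonzero, and $\sum_v c_v = 0$ (as $p_0(\alpha)=p_0(\beta)=m$). Consider the polynomial
\[
q(z) = \sum_{v=0}^{n} c_v\, z^{v}.
\]
The condition $p_k(\alpha) = p_k(\beta)$ for $k = 0,\dots,\ell$ says that $\sum_v c_v v^k = 0$ for $k=0,\dots,\ell$, which is exactly the statement that $z=1$ is a root of $q$ of multiplicity at least $\ell+1$ (differentiate $q$ repeatedly and evaluate at $1$, or note that $\sum_v c_v v^k = 0$ for $k \le \ell$ forces the first $\ell+1$ Taylor coefficients of $q(e^t)$ at $t=0$ to vanish). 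Actually it is cleanest to substitute $z = 1+w$ and work with $\tilde q(w) = q(1+w)$: then $\tilde q$ has a zero of order $\ge \ell+1$ at $w=0$, i.e. $w^{\ell+1} \mid \tilde q(w)$, so writing $\tilde q(w) = w^{\ell+1} r(w)$ we get a nonzero polynomial $r$. But $r$ need not have positive coefficient bounds we can control directly, so instead I would apply Theorem~\ref{thm:BEK} to a reciprocal-type construction; the intended route (following Borwein--Ingalls) is to note that $z=1$ being a root of $q$ of multiplicity $\ell+1$ produces, after an appropriate transformation of variables, a polynomial of degree $n$ with $\ell+1$ positive real roots (counted with multiplicity) and with coefficients bounded in absolute value by $m$ (since each $|c_v| \le m$) and constant and leading coefficients at least $1$ in absolute value.

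Granting that reduction, Theorem~\ref{thm:BEK} gives
\[
(\ell+1)^2 \le 2n \log\!\left(\frac{\sum_v |a_v|}{\sqrt{|a_0 a_n|}}\right) \le 2n \log\!\left(\sum_v |a_v|\right) \le 2n \log\!\bigl((n+1)m\bigr),
\]
and a little care with the constants — noting $n+1 \le 2m$ is not automatic, so one must instead bound $\sum_v|a_v| \le 2m$ directly using $\sum_v |c_v| \le 2m$ and the fact that the transformation does not inflate the coefficient sum — yields $(\ell+1)^2 \le 2n\log(2m)$, which is the claim. The main obstacle is precisely this last bookkeeping step: choosing the transformation of $q$ so that the resulting degree-$n$ polynomial simultaneously has $\ell+1$ positive real roots \emph{and} coefficient sum at most $2m$ with unit lower bounds on the extreme coefficients. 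This is exactly what Proposition~1 of Borwein--Ingalls supplies, so I would either cite it directly or reproduce its short argument: expand $q(z) = \sum_v c_v z^v$, factor out $(z-1)^{\ell+1}$, and track how the coefficients transform, using that the $c_v$ are integers bounded by $m$ in absolute value with zero sum.
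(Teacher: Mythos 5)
Your setup is exactly the paper's: your $q(z)=\sum_v c_v z^v$ is precisely the paper's polynomial $p_{\alpha,\beta}(x)=\sum_i x^{\alpha_i}-\sum_i x^{\beta_i}$ (just grouped by value), it is nonzero because $\alpha$ and $\beta$ are not permutations of each other, condition \eqref{eq:binoms} forces $z=1$ to be a root of multiplicity at least $\ell+1$, the coefficients are integers with $\sum_v|c_v|\leq 2m$, and Theorem~\ref{thm:BEK} is the tool. (The detour through power sums is harmless but unnecessary: \eqref{eq:binoms} directly says the first $\ell$ derivatives of $q$ vanish at $1$, since $\frac{d^j}{dx^j}x^{v}\big|_{x=1}=j!\binom{v}{j}$.)

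The gap is in the final step, which you leave to an unspecified ``reciprocal-type construction'' or to a citation of Borwein--Ingalls. No transformation of variables is needed, and the concrete fallback you sketch --- factoring out $(z-1)^{\ell+1}$ and tracking how the coefficients transform --- would not work: the coefficients of $q(z)/(z-1)^{\ell+1}$ are not bounded by anything like $2m$ (they can grow like binomial coefficients in $\ell$), and that quotient has no guaranteed positive real roots at all, so Theorem~\ref{thm:BEK} applied to it yields nothing. The correct completion is simpler: a root at $z=1$ of multiplicity $\ell+1$ already counts as $\ell+1$ positive real roots in Theorem~\ref{thm:BEK} (roots are counted with multiplicity, which is how the paper uses the theorem to bound $\mathrm{mult}_1$). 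The only adjustment required is to divide out $z^{r}$ where $r$ is the multiplicity of the root at $0$, so that the constant and leading coefficients of the remaining polynomial $\sum_{j=0}^{n'}a_jz^j$ are nonzero integers, giving $\sqrt{|a_0a_{n'}|}\geq 1$, while the coefficient sum is unchanged and still at most $2m$ and the degree $n'\leq n$. Theorem~\ref{thm:BEK} then gives $(\ell+1)^2\leq 2n\log(2m)$ directly. So your proposal follows the same route as the paper but stops short of (and slightly misidentifies) the one step that makes the coefficient bookkeeping work.
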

\begin{proof}
Since $\alpha_i,\beta_j\in \{0,\dots,n\}$ for all $i,j\in [m]$, 
\begin{equation}
\label{eqn:pab}
    p_{\alpha,\beta}(x) :=\sum_{i=1}^m x^{\alpha_i}-\sum_{i=1}^m x^{\beta_i}
\end{equation}
is a polynomial of degree at most $n$. For $c \in \mathbb{C}$, let $\text{mult}_c(p_{\alpha,\beta})$ denote the multiplicity of the root at $c$, or 0 if $c$ is not a root of $p_{\alpha,\beta}$. We will show that $\ell + 1\leq \text{mult}_1(p_{\alpha,\beta})\leq  \sqrt{2n\log(2m)}$.

Since $\alpha$ and $\beta$ are not related by a permutation, the polynomial $p_{\alpha,\beta}$ is non-zero. We may write (with $r=\text{mult}_0(p_{\alpha,\beta})$)
\[
p_{\alpha,\beta}(x)=x^r \left(\sum_{j=0}^{n'}a_jx^j\right)
\]
where $a_0$ and $a_{n'}$ are non-zero and $n'\leq n$ . The coefficients are all integral, so $\sqrt{|a_0a_{n'}|}\geq 1$. Moreover, from the definition of the polynomial in (\ref{eqn:pab}) there are at most $2m$ contributions of $\pm1$ to the coefficients, so we have $\sum_{i=0}^{n'} |a_i|\leq 2m$.
    
By Theorem \ref{thm:BEK}, the number of positive real roots of $\sum_{j=0}^{n'}a_jx^j$ is at most
\[
\sqrt{2n' \log\left(\frac{|a_0| + |a_1| + \dots + |a_{n'}|}{\sqrt{|a_0a_{n'}|}}\right)}\leq \sqrt{2n \log(2m)}
\]
and in particular, $\text{mult}_1(p_{\alpha,\beta})\leq \sqrt{2n \log(2m)}$. On the other hand, for all $j\in \{0,\dots,\ell\}$, equation (\ref{eq:binoms}) shows that
    \[
   \left|\left(\frac{d}{dx^j}\left[\sum_{i=1}^m x^{\alpha_i}-\sum_{i=1}^m x^{\beta_i}\right]\right)\right|_{x=1}= \sum_{i=1}^m j!\binom{\alpha_i}j-\sum_{i=1}^mj!\binom{\beta_i}j=0.
    \]
    Hence, $\ell + 1 \leq \text{mult}_1(p_{\alpha,\beta})$, and $\ell+1\leq \sqrt{2n\log(2m)}$ as desired.
\end{proof}
Condition (\ref{eq:binoms}) is equivalent to the condition that the first $\ell$ moments of $\alpha$ and $\beta$ agree. To see this, observe that $\{x^i:i\in \{0,\dots,\ell\}\}$ and $\{\binom{x}i:i\in \{0,\dots,\ell\}\}$ both form a basis for the polynomials of degree at most $\ell$. When $\alpha,\beta$ can be arbitrary integer sequences (instead of taking values in $\{0,\dots,n\}$) this variant is sometimes called the Prouhet-Tarry-Escott problem, and sequences are known with the first $\Omega(\sqrt{m})$ moments in common (see \cite[Proposition 3]{BI94} for a simple counting argument).

\section{Reconstructing the degree sequence}
\label{sec:degree}
The tools of the preceding section allow us to prove that the degree sequence of an $n$-vertex graph $G$ can be reconstructed from the $\ell$-deck of $G$ whenever $\ell\geq \sqrt{2n\log(2n)}$. The proof is essentially identical to that given by Taylor \cite{T90}, except for the use of the stronger bounds provided by Lemma~\ref{lem:moments}.

\degreesmallcards*

\begin{proof}%[Proof of Theorem \ref{thm:degree_small_cards}]
Let $G$ have vertices $v_1, \dots, v_n$, and let $\ell\geq \sqrt{2n\log(2n)}$ be an integer. 
By Lemma~\ref{lem:Kelly}, we can reconstruct the number of subgraphs of $G$ isomorphic to the star $K_{1,j}$ for all $j\in \{2,\dots,\ell-1\}$. Since vertex $v$ lies at the centre of $\binom{d(v)}{j}$ copies of $K_{1,j}$, we can compute the quantity
\[
\midtilde{n}_{K_{1,j}}(G)=\sum_{v\in V(G)} \binom{d(v)}{j}
\]
from the $\ell$-deck. We can also reconstruct
\[
\sum_{v\in V(G)} \binom{d(v)}{0}=n \text{ and }\sum_{v\in V(G)} \binom{d(v)}{1}=2\cdot e(G)
\]
from the $2$-deck.
Write $\alpha_i=d(v_i)$ for $i\in [n]$ where we may assume $d(v_1)\leq \dots \leq d(v_n)$. Suppose, for a contradiction, that a different degree sequence $\beta_1\leq \dots \leq \beta_n$ gives the same counts. Then, for $j \in \{0, \dots, \ell -1\}$,
\[
\sum_{i=1}^n \binom{\alpha_i}j=\sum_{i=1}^n \binom{\beta_i}j.
\]
Since $\alpha, \beta\in\{0,\dots,n-1\}^n$ are not permutations of each other, Lemma \ref{lem:moments} applies to show $\ell\leq \sqrt{2(n-1)\log(2n)}$ as desired.
\end{proof}

\section{Recognising connectedness}
\label{sec:conn}
In this section, we prove our theorem on reconstructing connectedness from the $\ell$-deck. Recall that throughout this paper, a copy $H'$ of $H$ in some graph $G$ refers to an induced subgraph of $G$ that is isomorphic to $H$. 

The main idea of the proof is that a graph $G$ has a connected component isomorphic to some graph $H$ on strictly less vertices than $G$, if and only if it has an induced subgraph isomorphic to $H$ `without any neighbours'. By a similar approach to the previous section, when $|V(H)|$ is small we can actually compute the entire `degree sequence', that is, for each $k$ we can find the number of induced copies of $H$ with $k$ `neighbours'. So we are done if $G$ has a small component. But if $G$ has no small components, then it is either connected or only has medium-sized components (in which case we recognize that it has no large connected subgraphs and we are done). 

\connectednesssmallcards*

\begin{proof}
Let $G$ be an $n$-vertex graph and let $\eps = 1/10$, so our assumption is that $\ell \geq 9n/10 = (1- \eps)n$. We begin by making an additional assumption on the size of $n$; it was shown by Kostochka, Nahvi, West, and Zirlin  \cite{KNWZdc} that the connectedness of a graph can be recognised from the $(n-3)$-deck for $n \geq 7$, so we can assume that $n \geq 39$.

Using Lemma~\ref{lem:Kelly} we can count the number of connected subgraphs of $G$ on $\ell$ vertices. If there are no such subgraphs, the graph must be disconnected and we are done. We may therefore assume that $G$ is either connected, or its largest component has order at least $\ell$. In particular, if $G$ is not connected then it has a component of order at most $n-\ell$.

We will reconstruct all components that have at most $n - \ell $ vertices from the $\ell$-deck. Let $H$ be a connected graph with $h$ vertices, where $1 \leq h \leq \eps n$.
Since $h\leq \ell$, we may compute $n_H(G)$ from the $\ell$-deck by Lemma \ref{lem:Kelly}.
Suppose $m=n_H(G)>0$. Write $H_1,\dots,H_m$ for the induced copies of $H$ in $G$, and define the \emph{neighbourhood} of $H_i$ by \[\Gamma(H_i) = \{v \in V(G)\setminus V(H_i): vu \in E(G) \text{ for some }u\in H_i\}.\]
Define the \emph{degree} of $H_i$ to be $|\Gamma(H_i)|$, and denote it by $\alpha_i$. Note that $G$ has a component isomorphic to $H$ if and only if $\alpha_i=0$ for some $i\in [m]$. Thus, reconstructing the sequence $(\alpha_1,\dots,\alpha_m)\in \{0,\dots, n-h\}^m$ determines the number of components isomorphic to $H$. 

We now show that we can reconstruct $(\alpha_1,\dots,\alpha_m)$ up to permutation. 
Since $1\leq h\leq \eps n$ and $m\leq \binom{n}h\leq \left(\frac{en}{h}\right)^h$, we have 
\begin{align*}
    \sqrt{2(n-h)\log(2m)} &\leq \sqrt{2(n-h)h\log(en/h) + 2n \log(2)}\\
    &\leq  n\sqrt{2(1-\eps)\eps\log(e/\eps) + 2\log(2)/n},
\end{align*}
where we also have that $(n-h)h\log(en/h)$ is increasing in $h$ within the given range. Hence by Lemma \ref{lem:moments}, it suffices to show that we can reconstruct 
\begin{equation}
\label{eq:binoms_conn}
\sum_{i=1}^m\binom{\alpha_i}j \text{ for all integers }0\leq j\leq N,
\end{equation}
where $N = n\sqrt{2(1-\eps)\eps\log(e/\eps) + 2\log(2)/n}$.

Let $P$ denote the set of pairs of vertex sets $(A,B)$ where $A\subseteq B\subseteq V(G)$, $G[A]\cong H$, $|B|=|A|+j$ and $A$ is \textit{dominating} in $G[B]$ -- that is, each vertex in $B\setminus A$ is adjacent to some vertex in $A$. Each $(A,B)\in P$ has some $i\in [m]$ for which $G[A]\cong H_i$ and $B$ is contained in the neighbourhood of $H_i$, so $|P| = \sum_{i = 1}^m \binom{\alpha_i}{j}$.

For $j \geq 0$, let $\mathcal{H}_j$ denote the set of $(h+j)$-vertex graphs that consist of $H$ along with $j$ additional vertices, all of which are adjacent to at least one vertex in the copy of $H$ (we include each isomorphism type once). If $(A,B)\in P$, then $B$ corresponds to some $H' \in \mathcal{H}_j$. 
By definition, there are $n_{H'}(G)$ vertex sets $B\subseteq V(G)$ with $G[B]\cong H'$. Since $\mathcal{H}_j$ and $H$ are known to us, for each $H'\in \mathcal{H}_j$ we can calculate the number $n(H,H')$ of dominating copies of $H$ in $H'$. Since
\[
\sum_{H'\in \mathcal{H}_j}n(H,H')n_{H'}(G)=|P|=\sum_{i=1}^m\binom{\alpha_i}j,
\]
it only remains to show that we can determine $n_{H'}(G)$ from the $\ell$-deck.

 We may use Lemma \ref{lem:Kelly} to reconstruct $n_{H'}(G)$ if $|H'|=h+j\leq \ell$. For $j\leq N$ and $n\geq 39$, we find that
\[
h+j\leq \eps n + N\leq n-\eps n\leq \ell,
\]
where the middle inequality follows from the fact that, using $\eps = 1/10$, we have
\begin{equation*}
\label{eqn:bound}
    \sqrt{2(1-\eps)\eps\log(e/\eps) + 2\log(2)/39} \leq
1 - 2 \eps.
\end{equation*}

This shows that we can reconstruct (\ref{eq:binoms_conn}), and hence the number of components isomorphic to $H$. In particular, doing so for every graph $H$ with at most $n-\ell$ vertices allows us to determine whether any component of $G$ has at most $n-\ell$ vertices, which we saw would hold if and only if $G$ is disconnected.
\end{proof}

We remark that the constant $9/10$ can be improved slightly in the proof above provided $n$ is large enough. Indeed, the proof holds for any $n$ and $\eps$ such that \[\sqrt{2(1-\eps)\eps\log(e/\eps) + 2\log(2)/n} \leq
1 - 2 \eps,\] and, for large enough $n$, we can take $\eps \approx 0.1069$.

\newpage
\section{Reconstructing trees}
\label{sec:trees}
We now work toward proving our main theorem on reconstructing trees, which we recall below.

\trees*

The proof of Theorem \ref{thm:trees} is spread across the following four subsections. First, we introduce a general technique for counting balls around a subgraph, which may be of independent interest. This strategy allows us to keep track of copies of fixed graphs in $T$ that have a specified distinguished subgraph, which is a crucial ingredient of our proofs. This is done in Section~\ref{sec:extcount}.

In Section~\ref{sec:subrecog}, we address the recognition problem and prove Theorem~\ref{thm:treerecog}.

The remaining parts contain the proof of reconstruction, which is split into two cases depending on whether or not the tree $T$ contains a path that is long relative to the order of the graph $n$ and the number $\ell$ of vertices on each card.
Let the \emph{length} of a path $P$ be the number of edges in $P$, or equivalently $|V(P)|- 1$. The \emph{diameter} of a graph $G$ is the maximum distance between two vertices in $G$, and for a tree $T$ this is the same as the length of a longest path. When the diameter is less than about $\ell-2n/3$, we can apply an argument based on reconstructing branches off the centre. For trees with diameter higher than this (in fact there is some overlap between cases), we will split the tree into two parts by removing a central edge, and then recognising these parts and how to glue them back together.

Having recognised that every reconstruction of the deck is a tree, the high diameter case is handled by the following lemma which we prove in Section \ref{sec:high_diam}. 

\begin{restatable}{lemma}{highdiam}
\label{lem:high_diam}
Let $\ell,k\in [n]$ with $k > 4\sqrt{\ell} + 2(n-\ell)$. If $T$ is an $n$-vertex tree with diameter $k -1$, then $T$ can be reconstructed amongst connected graphs from its $\ell$-deck provided $\ell \geq \frac{2n}{3} + \frac{4}{9} \sqrt{6n + 7} + \frac{11}{9}$. 
\end{restatable}

If $T$ has low diameter, then we instead use the following lemma which we prove in Section \ref{subsec:low_diam}.

\begin{restatable}{lemma}{lowdiam}
\label{lem:low_diam}
Let $\ell,k\in [n]$ with $k < \ell - \frac{2n+1}{3}$. If $T$ is an $n$-vertex tree with diameter $k -1$, then $T$ is reconstructible from its $\ell$-deck.
\end{restatable}

The proof of Theorem~\ref{thm:trees} then amounts to verifying that the assumptions are sufficient for recognition, and that our definitions of high and low diameter together cover the full range. 
The latter calculation is the source of the threshold on card size in the statement of Theorem~\ref{thm:trees}. 

\begin{proof}[Proof of Theorem~\ref{thm:trees}]
Let $k$ be the number of vertices in the longest path in $T$.
The conditions on $\ell$ and $n$ imply that $\ell \geq \frac{2n}{3} + \frac{4}{9} \sqrt{6n + 7} + \frac{11}{9}$. This allows us to recognise that $T$ is a tree by Theorem~\ref{thm:treerecog}, and moreover that $T$ is reconstructible by Lemma~\ref{lem:high_diam} when $k> 4\sqrt{\ell} + 2(n-\ell)$. 
We show that the remaining $k$ satisfy the condition in Lemma~\ref{lem:low_diam}. 
It suffices to verify that $n-\ell < \frac{n-3k-1}{3}$. The right hand side is decreasing in $k$, and now $k \leq 4\sqrt{\ell} + 2(n-\ell)$, so Lemma~\ref{lem:low_diam} applies provided 
    \[n-\ell< \frac{n-12\sqrt{\ell}-6(n-\ell)-1}{3}\]
    which is equivalent to our assumed condition
    \[\ell > \frac{8n}{9} + \frac{4}{9}\sqrt{8 n + 5} +1. \qedhere\]
\end{proof}

\subsection{Counting extensions}\label{sec:extcount}
Given a graph $H$, we define an \emph{$H$-extension} to be a pair $H_{\textrm{ext}}=(H^+,A)$ where $H^+$ is a graph and $A\subseteq V(H^+)$ is a subset of vertices with $H^+[A]\cong H$. The idea is that $H^+$ may contain multiple copies of $H$ as induced subgraphs, so we are picking out one in particular. The \textit{order} of $H_{\textrm{ext}}=(H^+,A)$ is $|H_{\textrm{ext}}|=|V(H^+)|$.

We will usually work with $H$-extensions in a setting where $H$ is an induced subgraph of an ambient graph $G$, and in this case a natural family of $H$-extensions can be obtained by considering neighbourhoods. Specifically, for $d\in \N$, the \emph{(closed) $d$-ball} of an induced subgraph $H$ of a graph $G$ is
\[
B_{d}(H,G)=G[\{v\in V(G): d_G(v,H)\leq d\}],
\]
the subgraph induced by the set of vertices of distance at most $d$ from $H$ including the vertices of $H$ itself. It is useful to view the $d$-ball of $H$ as the $H$-extension $(B_d(H,G),V(H))$. 

Two $H$-extensions $(G_1,A_1)$ and $(G_2,A_2)$ are \textit{isomorphic} if there is a graph isomorphism $\varphi:G_1\to G_2$ with $\varphi(A_1)=A_2$. Let $m_d(\Hext,G)$ be the number of copies of $H$ in $G$ whose $d$-ball is isomorphic (as an $H$-extension) to $\Hext$. In addition, we say that an $H$-extension $(H^+,A)$ is a \emph{sub-$H$-extension} of $(H^{++},B)$ if $H^+$ is an induced subgraph of $H^{++}$ and $A=B$. 

Our key counting result for extensions states that it is possible to reconstruct $m_d(\Hext,G)$ from the $\ell$-deck provided the $d$-balls of all copies of $H$ are small enough to appear on the cards. 

\begin{lemma}
\label{lem:maximal_IE}
Let $\ell,d\in \N$ and let $G$ be a graph on at least $\ell+1$ vertices. Let $H$ be a graph on at most $\ell-1$ vertices. From the $\ell$-deck of $G$, it is possible to recognise whether the $d$-ball of every induced copy of $H$ in $G$ has fewer than $\ell$ vertices, and if this is the case, the quantity $m_d(\Hext,G)$ is determined by the $\ell$-deck for any $H$-extension $\Hext$.
\end{lemma}

\begin{proof}
Let $\mathcal{H}$ denote the set of graphs $H^+$ such that $|V(H^+)|\leq \ell$ and there is a copy $H'$ of $H$ in $H^+$ in which all the vertices of $H^+$ are at distance (in $H^+$) at most $d$ from $H'$. These represent all possible $d$-balls of $H$ with at most $\ell$ vertices, and the ones that appear in $G$ will be a subset of these.
Note that it is not necessary (nor guaranteed) that all copies of $H$ in $H^+$ satisfy the above distance condition, rather only that there is at least one such copy.

For any $H^+\in \mathcal{H}$, we can reconstruct $n_{H^+}(G)$ from the $\ell$-deck using Lemma~\ref{lem:Kelly}.
The $d$-balls of every induced copy of $H$ have fewer than $\ell$ vertices if and only if the $n_{H^+}(G) = 0$ for every $H^+ \in \mathcal{H}$ with $|H^+| = \ell$, and we can tell if this is the case.
Suppose that indeed the $d$-balls around every induced copy of $H$ have fewer than $\ell$ vertices and set 
\[
k=\max\{|V(H^+)|:H^+\in \mathcal{H},~n_{H^+}(G)>0\}.
\]
For a fixed $H^+\in \mathcal{H}$ with $|V(H^+)|=k$, we observe that every copy $H'$ of $H$ for which $B_d(H',H^+)\cong H^+$ also satisfies $B_d(H', G) \cong H^+$ by the maximality of $k$ and the definition of $\mathcal{H}$.

Let $\calHext$ denote the set of isomorphism classes of $H$-extensions $(H^+,A)$ with $H^+\in \mathcal{H}$.
By the preceding observation, if $\Hext=(H^+,A)\in \calHext$ with $|H^+|=k$, then the number of copies of $H$ whose $d$-balls are isomorphic to $\Hext$ is the number of copies of $H^+$ in $G$ times the number of copies of $H$ in $H^+$ whose $d$-ball in $H^+$ is isomorphic to $\Hext$ (as $H$-extensions). That is, 
\begin{equation}
\label{eq:easy_case}
m_d(\Hext,G)={n}_{H^+}(G) m_d(\Hext,H^+),
\end{equation}
Both of these quantities are reconstructible from the $\ell$-deck, so we are done in this case.

If $|V(H^+)| < k$, then the $d$-ball of $H$ may be strictly larger than $H^+$ and the formula (\ref{eq:easy_case}) does not apply. This can be corrected by subtracting the number of $H\subseteq H^+$ for which $H^+$ is not the $d$-neighbourhood of that copy of $H$ in $G$. To count these, we select each `maximal' $d$-neighbourhood in turn, and subtract one from the relevant count for each strictly smaller $H^+$ that it contains. Any leftover $H^+$ that have not been accounted for must then be maximal.

Explicitly, for $\Hext'\in\calHext$ distinct from $\Hext$, let $n(\Hext,\Hext')$ give the number of sub-$H$-extensions of $\Hext'$ isomorphic to $\Hext$. We claim that 
\[
m_d(\Hext,G)= {n}_{H^+}(G) m_d(\Hext,H^+)-\sum_{\substack{\Hext'\in \calHext \\|\Hext'|>|\Hext|}} n(\Hext,\Hext')m_d(\Hext',G).
\]
Note that when $|\Hext|=k$, the formula above agrees with (\ref{eq:easy_case}). The terms $m_d(\Hext,H^+)$ $n(\Hext,\Hext')$ and the domain of the summation are already known to us, and we can reconstruct 
$n_{H^+}(G)$ for all $H^+\in \mathcal{H}$ using Kelly's Lemma. Moreover, we may assume that we have reconstructed the terms $m_d(\Hext',H^+)$ for $|\Hext'|>|\Hext|$ by induction with base case $|\Hext|=k$, so verifying the formula will complete the proof.

The first term of the formula ${n}_{H^+}(G) m_d(\Hext,H^+)$ counts the number of pairs $(A,B) \subseteq V(G) \times V(G)$ such that
\begin{itemize}
\itemsep=0mm
    \item $G[B]$ is a copy of $H^+$ (contributing 1 to ${n}_{H^+}$),
    \item $A\subseteq B$, and $G[A]$ is a copy of $H$ and is counted by $m_d(\Hext,H^+)$ for a fixed copy of $H^+$, 
    \item $B$ is a subset of the $d$-ball around $A$ (i.e. $B \subseteq B_d(G[A],G)$).
\end{itemize}
Compared to $m_d(\Hext,G)$, the above term overcounts by 1 whenever $B \subsetneq B_d(G[A],G)$. Thus, it just remains to verify that the number of pairs with $B \neq B_d(G[A],G)$ is given by
$$\sum_{|\Hext'|>|\Hext|} n(\Hext,\Hext')m_d(\Hext',G).$$ To see that this is true, by definition the correction term counts triples $(A,B,C)$ with $A\subseteq B\subsetneq C \subseteq V(G)$ such that
\begin{itemize}
\itemsep=0mm
    \item $G[A]$ is a copy of $H$, 
    \item $G[B]$ is a copy of $H^+$  
    \item $G[C]\cong B_d(G[A],G)$.
\end{itemize} 
Each pair $(A,B)$ with $B\neq B_d(G[A],G)$ is in a unique such triple, namely with $C=V(B_d(G[A],G))$; if $B=B_d(G[A],G)$, then no suitable $C$ with $B\subsetneq C$ can be found. 
\end{proof}

As an aside, we mention that by setting $d=1$ and considering the $H$-extension $(H,V(H))$ in Lemma~\ref{lem:maximal_IE}, one can count the number of components isomorphic to $H$. 
\begin{corollary}
Let $H$ and $G$ be graphs with $|V(H)|\leq \ell-1$ and $n=|V(G)|$. If there is no copy of $H$ in $G$ for which $|B_1(H,G)| \geq \ell$, then we can reconstruct the number of components of $G$ isomorphic to $H$ from $\mathcal{D}_\ell(G)$.
\end{corollary}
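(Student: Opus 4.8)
The goal is to reconstruct the number of components of $G$ isomorphic to $H$ from $\mathcal{D}_\ell(G)$, given that $|V(H)|\le \ell-1$ and no copy of $H$ in $G$ has a closed $1$-ball with at least $\ell$ vertices. The plan is to apply Lemma~\ref{lem:maximal_IE} directly with the parameter choice $d=1$ and the particular $H$-extension $H_e=(H,V(H))$, where we view $H$ itself (with its full vertex set distinguished) as an extension of $H$. The hypothesis that no copy of $H$ in $G$ has $|B_1(H,G)|\ge\ell$ is precisely the negation of Condition~1 in Lemma~\ref{lem:maximal_IE}, so Condition~2 must hold: for every $H$-extension, and in particular for $H_e=(H,V(H))$, the quantity $m_1(H_e,G)$ is reconstructible from the $\ell$-deck. (One should also check the lemma's hypothesis that $G$ has at least $\ell+1$ vertices; if $n\le \ell$ the statement is either vacuous or trivial since the whole graph appears as a card, so we may assume $n\ge \ell+1$.)

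The second step is to interpret $m_1(H_e,G)$ combinatorially. By definition, $m_1(H_e,G)$ counts the copies $H'$ of $H$ in $G$ whose closed $1$-ball $B_1(H',G)$ is isomorphic, as an $H$-extension, to $(H,V(H))$. But $B_1(H',G)$ is isomorphic to $(H,V(H))$ exactly when it has no vertices outside $H'$, i.e. when no vertex of $G\setminus V(H')$ is adjacent to $V(H')$; equivalently, when $H'$ is a union of connected components of $G$. Since $H$ is connected in the intended application, this says precisely that $H'$ is a connected component of $G$ isomorphic to $H$. (If $H$ is disconnected one would instead be counting induced subgraphs equal to a union of components and isomorphic to $H$, but the corollary as stated is for the component-counting application; in either reading the identity $m_1(H_e,G)=\#\{\text{components of }G\text{ isomorphic to }H\}$ is immediate from the definitions.)

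Putting the two steps together: $m_1(H_e,G)$ is reconstructible from $\mathcal{D}_\ell(G)$ by Lemma~\ref{lem:maximal_IE}, and $m_1(H_e,G)$ equals the number of components of $G$ isomorphic to $H$, which completes the proof. There is essentially no obstacle here beyond correctly matching the definitions; the only point requiring a moment's care is the observation that the closed $1$-ball of an induced subgraph equals that subgraph exactly when the subgraph is a union of components, which is what converts the "extension of order $|V(H)|$" condition into the "component" condition. All the real work has already been done inside Lemma~\ref{lem:maximal_IE}.
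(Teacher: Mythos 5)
Your proof is correct and follows exactly the paper's (one-line) argument: apply Lemma~\ref{lem:maximal_IE} with $d=1$ and the $H$-extension $(H,V(H))$, noting that $m_1((H,V(H)),G)$ counts precisely the copies of $H$ whose closed $1$-ball adds no new vertices, i.e.\ the components isomorphic to $H$ when $H$ is connected. Your additional remarks on the $n\le\ell$ edge case and on disconnected $H$ are reasonable clarifications but do not change the argument.
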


\subsection{Recognising trees}\label{sec:subrecog}

This section contains the proof of Theorem~\ref{thm:treerecog}, which is an application of the extension-counting result established in the previous section.

\treerecog*

\begin{proof}
Let $G$ be a graph and suppose we are given $\mathcal{D}_\ell(G)$. By Kelly's Lemma (Lemma~\ref{lem:Kelly}), we can reconstruct the number $m$ of edges provided $\ell \geq 2$. We may suppose that $m=n-1$, otherwise we can already conclude that $G$ is not a tree. It suffices to show that we can determine whether $G$ contains a cycle, or equivalently to determine whether $G$ is connected.

If $G$ has a cycle of length at most $\ell$, then the entire cycle will appear on a card and we can conclude that $G$ is not a tree. We may therefore assume that every cycle in $G$ has length greater than $\ell$. If the graph does not contain a connected card, then the graph cannot be a tree, and so we may assume that there is a connected card and the largest components in $G$ have at least $\ell$ vertices each. Since $\ell \geq (2n + 4)/3$, there is only one component $A$ with at least $\ell$ vertices and the other components have at most $\ell-1$ vertices. 

Let $d = \ceil{\ell - n/2 - 1}$. For a vertex $x\in V(G)$, denote the $d$-ball around $x$ in $G$ by $B_d(x)$. Using Lemma \ref{lem:maximal_IE} with $H$ being the graph consisting of a single vertex, we find that either there is an $x\in V(G)$ with $d$-ball of order at least $\ell$ or we can reconstruct the collection of $d$-balls (with `distinguished' centres). 

Suppose firstly that there exists $x\in V(G)$ such that $|B_d(x)| \geq \ell$. We claim that then $G$ is a tree. Assume towards a contradiction that there is a cycle in $G$. Since this must have more than $\ell$ vertices, any cycle in $G$ must be contained in the largest component $A$ (the smaller components have order at most $\ell-1$). Let $C$ be a shortest cycle in $A$. Similarly, note that $x \in A$ since otherwise the $d$-ball around $x$ cannot have $\ell$ vertices. If $|B_d(x) \cap V(C)| \leq 2d+1$, then
\[
|B_d(x)| \leq n - | V(C) \setminus B_d(x)| \leq n - (\ell +1) + (2d+1) \leq \ell -1\]
by our choice of $d$. Thus, $B_d(x) \cap V(C)$ contains at least $2d + 2$ vertices. Choose two vertices $c_1,c_2\in B_d(x)\cap V(C)$ joined by a subpath $C'$ of $C$ (possibly $C'$ is a single edge) such that $C'$ does not contain any other vertex of $B_d(x)$.
Let $C''$ be the other path from $c_1$ to $c_2$ in $C$. This must contain at least $2d$ other vertices of $B_d(x)\cap C$, so $C''$ is a path of length at least $2d + 1$. However, there is also a path $P$ from $c_1$ to $c_2$ in the $d$-ball around $x$ of length at most $2d$, and this intersects $C'$ only at the endpoints $c_1$ and $c_2$. Replacing the path $C''$ with the path $P$ forms a cycle which is strictly shorter than $C$, giving a contradiction. Hence, $G$ cannot have any cycles and must be a tree.

We may now assume that we can reconstruct the collection of $d$-balls and will show how to recognise whether the graph is connected in this case.
In any component of order at most $n - \ell$, there must be some vertex $x$ such that the distance from $x$ to any vertex in the same component is at most $(n - \ell)/2$. By our choice of $\ell$ and $d$,
\[\frac{n - \ell}{2} \leq \ell - \frac{n}{2} - 2 \leq d - 1.
\]
Thus, if there is a component of order at most $n-\ell$ (which happens if and only if $G$ is not a tree), then there must be a $d$-ball with radius at most $d-1$. Conversely, if we discover such a $d$-ball, then we know that the graph is disconnected since the $d$-ball must form a component due to its radius, yet has at most $\ell-1$ vertices. Hence, $G$ is a tree if and only if all $d$-balls have radius $d$. This shows that we can recognise connectedness and completes the proof.
\end{proof}

\subsection{High diameter}
\label{sec:high_diam}
The main result in this section is Lemma \ref{lem:high_diam}, which states that a tree $T$ is reconstructible from its $\ell$-deck provided it contains a sufficiently long path. 

Removing an edge $e$ from a tree $T$ splits $T$ into two components, and our goal will be to recognise a pair of graphs $(R,R^c)$ which are the components left after removing an edge from $T$. However, it is not enough to know that $T$ is formed by connecting $R$ and $R^c$ with an edge, we also need to know which vertices the edge is connected to, and we will actually look for pairs for which we can also deduce this. 

We are specifically interested in induced subgraphs that are connected to the rest of the graph by a single edge, which leads us to consider copies of $R$ (and $R^c$) with this property.
For a graph $H$, let a \emph{leaf $H$-extension} be a pair $\Hext = (H^+, A)$ where 
\begin{itemize}
\itemsep=0mm
    \item $H^+$ is obtained by adding a single vertex connected by a single edge to a vertex of $H$, and
    \item $A\subset V(H^+)$ is such that $H^+[A] \cong H$.
\end{itemize} 
This is a special case of the extensions defined in Section~\ref{sec:extcount}.
We will refer to the additional edge added to $H$ to form $H^+$ as the \emph{extending edge}.
Note that if $R$ is a component of $T - e$, then the $1$-ball of $T[V(R)]$ in $T$ is a leaf $R$-extension, but there may be multiple (non-isomorphic) leaf $R$-extensions in $T$.

The extra edge in a leaf extension indicates where to glue, so we would be done if we could identify two leaf extensions $C=(C^+,V_C)$ and $D=(D^+,V_D)$ for which the vertex set of $G$ is the disjoint union of $V(C)$ and $V(D)$. We demonstrate in Lemma~\ref{lem:bridge} a case where this can be done from $\mathcal{D}_{\ell}(G)$ using counts of the relevant leaf extensions obtained by Lemma~\ref{lem:maximal_IE}.
Lemma~\ref{lem:bridge} is not specialised to trees with high diameter and the final step in proving Lemma~\ref{lem:high_diam} is showing that the lemma applies to trees with high diameter. 

We say an edge $e$ in a connected graph $G$ is a \textit{bridge} if the graph $G-e$ obtained from the removing the edge is disconnected. 
\begin{lemma}
\label{lem:bridge}
Let $G$ be a connected graph with a bridge $e$, and $R, R^c \subseteq G$ be the components of $G-e$. If $G$ has no induced subgraph $H$ isomorphic to $R$ or $R^c$ with $|V(B_1(H,G))|\geq \ell$, then $G$ is the only connected graph up to isomorphism with the deck $\mathcal{D}_\ell(G)$.
\end{lemma}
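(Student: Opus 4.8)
\textbf{Proof proposal for Lemma~\ref{lem:bridge}.}
The plan is to show that the $\ell$-deck determines a pair of leaf extensions whose underlying vertex sets partition $V(G)$, together with the endpoints of the connecting edge, which suffices to reconstruct $G$. Since $G$ is connected and $e$ is a bridge with $G-e$ having components $R$ and $R^c$, the $1$-ball $B_1(R,G)$ is exactly $R$ plus the single vertex of $R^c$ incident to $e$ joined by that one edge; viewed as an $R$-extension this is a leaf $R$-extension $C=(C^+,V_C)$, and similarly we get a leaf $R^c$-extension $D=(D^+,V_D)$. By hypothesis no copy of $R$ or $R^c$ in $G$ has a $1$-ball on at least $\ell$ vertices, so Condition~1 of Lemma~\ref{lem:maximal_IE} fails for both $H=R$ and $H=R^c$; hence Condition~2 holds and we can reconstruct $m_1(H_e,G)$ from $\mathcal{D}_\ell(G)$ for every leaf $R$-extension and every leaf $R^c$-extension. (We may assume $|V(R)|,|V(R^c)|\le \ell-1$, since a component of size $\ell$ or more would already make its own $1$-ball have at least $\ell$ vertices, contradicting the hypothesis unless the other side is empty, which cannot happen as $e$ is an edge.)

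The key combinatorial step is to extract a \emph{complementary} pair of leaf extensions from these counts. First reconstruct $n$ (from $\mathcal{D}_\ell$, e.g.\ via the number of edges plus connectedness, or simply as it is given). For each leaf $H$-extension $H_e=(H^+,A)$ with $m_1(H_e,G)>0$, let the \emph{exterior vertex} be the unique vertex of $H^+\setminus A$. I claim that $H_e$ arises as the $1$-ball of the particular copy of $R$ (respectively $R^c$) that is the $G-e$ side only when $|V(H^+)|-1 = |V(R)|$ (resp.\ $|V(R^c)|$) and $|V(R)|+|V(R^c)| = n$, but this is automatic; the real content is pinning down \emph{which} leaf extension. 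Here is where I would use a counting identity: a leaf $R$-extension $C=(C^+,V_C)$ and a leaf $R^c$-extension $D=(D^+,V_D)$ with $|V(C^+)|-1+|V(D^+)|-1=n$ can be ``glued back'' along their exterior vertices to form a candidate graph $\widehat G$, and one checks that $\widehat G\cong G$ precisely when the relevant product of extension counts is positive and consistent. Concretely, I would argue that $G$ contains a copy of $R$ whose $1$-ball is $C^+$ and simultaneously a copy of $R^c$ whose $1$-ball is $D^+$ such that these two copies are vertex-disjoint and cover $V(G)$: since $e$ is a bridge this configuration genuinely occurs (take the two sides of $G-e$), so the set of feasible glued graphs $\widehat G$ is nonempty, and every such $\widehat G$ has the same $\ell$-deck as $G$ by Lemma~\ref{lem:Kelly} applied in reverse — hence, as we only need to \emph{reconstruct} $G$ up to isomorphism and all candidates consistent with the reconstructed extension counts are isomorphic, we are done.

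Let me be more careful about the last point, which I expect to be the main obstacle: showing that the candidate $\widehat G$ is \emph{unique up to isomorphism}, i.e.\ that two different glued graphs cannot both be consistent with all the reconstructed data. The cleanest route is not to glue blindly but to reconstruct, for the genuine complementary pair $(C,D)$, enough structure to rebuild $G$ directly: the copy of $R$ on the $G-e$ side is a full component of $G-e$, so $R$ together with its distinguished exterior vertex (the $R^c$-endpoint of $e$) and $R^c$ together with \emph{its} distinguished exterior vertex (the $R$-endpoint of $e$) determine $G$ by adding back $e$. So it suffices to identify one leaf $R$-extension $C$ and one leaf $R^c$-extension $D$ with $m_1(C,G)>0$, $m_1(D,G)>0$, and $|V(C^+)|+|V(D^+)| = n+2$; then glue. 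If several such pairs $(C,D)$ exist I must show they all yield the same graph; but in fact the pair coming from $e$ itself satisfies the additional property that the exterior vertex of $C$ together with $R^c$ ``matches'' $D$ — precisely, deleting the exterior vertex's edge from $C^+$ gives $R$, and $G = (\text{copy of }R)\cup(\text{copy of }R^c)$ joined by one edge between the two indicated vertices. Any graph built this way from leaf extensions that actually occur in $G$ with the right orders must be isomorphic to $G$, because $G-e$ has exactly these two components and a bridge endpoint on each. The remaining technical care is handling the degenerate case $R\cong R^c$ (where $C$ and $D$ come from the same family) and ensuring the exterior-vertex identification is well-defined up to the automorphisms of $R$ and $R^c$; these are finite checks that do not affect the argument's structure.
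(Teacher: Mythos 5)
Your setup coincides with the paper's: you pass to leaf extensions, observe that the hypothesis rules out Condition~1 of Lemma~\ref{lem:maximal_IE}, and so reconstruct $m_1(H_e,G)$ for the relevant leaf extensions. The gap is the final step, where you assert that \emph{any} pair of leaf extensions that genuinely occur in $G$ with complementary orders glues back to a graph isomorphic to $G$ (``Any graph built this way \dots must be isomorphic to $G$''). That claim is false, and nothing in your argument provides a deck-computable criterion for picking out the correct pair. Concretely, let $G$ be the path $v_1\dots v_9$ with an extra leaf $u$ attached to $v_2$, let $e=v_6v_7$, so $R=G[\{v_1,\dots,v_6,u\}]$ and $R^c=G[\{v_7,v_8,v_9\}]\cong P_3$, and take $\ell=9$ (no copy of $R$ or $R^c$ has a $1$-ball on $9$ vertices, so the hypothesis holds). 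The set $\{v_1,v_2,u\}$ induces a second pendant copy of $P_3$, attached to the rest of $G$ through its \emph{middle} vertex, so the corresponding leaf $R^c$-extension also has $m_1>0$. Pairing it with the unique leaf $R$-extension gives orders summing to $n$ and satisfies every condition you impose, yet gluing along the indicated vertices produces a tree of smaller diameter than $G$. So positivity of the two counts plus the order condition does not certify a pair, even when the two extensions really are extensions of $R$ and $R^c$; note also that the reconstructor is not given the isomorphism types of $R$ and $R^c$, so all complementary-order pairs must be examined, making a verification test unavoidable. Your fallback remarks -- that consistent candidates ``have the same $\ell$-deck by Lemma~\ref{lem:Kelly} applied in reverse'', or that a ``product of extension counts is positive and consistent'' -- are not arguments; Kelly's Lemma does not run in reverse.

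The paper closes exactly this gap with an extra counting test, which is the heart of its proof of Lemma~\ref{lem:bridge}. For a candidate pair $(C,D)$ with $|V(C)|\le|V(D)|$ and $|V(C)|+|V(D)|=n$, fix a pendant copy of $D$ realising $D_e$; connectivity of the two sides forces every copy of $C$ counted by $m_1(C_e,G)$ either to lie entirely inside that copy of $D$ or to equal its complement exactly. Hence $m_1(C_e,G)$ equals $N(C_e,D^+)$ or $N(C_e,D^+)+1$, where $N(C_e,D^+)$ is the number of leaf $C$-extensions inside $D^+$ whose copy of $C$ lies in $V_D$ -- a quantity computable from $C_e$ and $D_e$ alone -- and the pair glues to give $G$ precisely when the larger value occurs. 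In the example above this test rejects the bad pair, because the spurious copy of $P_3$ sits inside $D^+$ and is absorbed into $N(C_e,D^+)$. Some verification step of this kind is what your proposal is missing.
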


\begin{proof}
We prove the lemma by describing an algorithm that takes in the deck $\mathcal{D}_\ell(G)$ of a connected graph $G$, and either returns a connected graph, or a failure. We will show that if the algorithm returns a graph $G'$, it must be isomorphic to $G$.
This shows that such a $G$ is reconstructible since if $\mathcal{D}_\ell(G_1) = \mathcal{D}_\ell(G_2)$, then applying the algorithm to this shared deck will produce a single graph $G'$ for which $G' \cong G_1 \cong G_2$.
The condition in the hypothesis that $G$ has a suitable bridge $e$ is only used to show that the algorithm will definitely output a graph.

The idea of the procedure is to create a finite list of candidate graphs guaranteed to contain both components of $G-e$, and then test all pairs of such graphs glued together in every feasible way that could reconstruct $G$. This latter step is refined by using leaf extensions to indicate how these gluings occur. The key point is to show that we can identify when such a construction actually produces $G$ and then terminate. 

Given any connected graph $H$ on at most $\ell-1$ vertices and a deck $\mathcal{D}_\ell(G)$, we can check directly from the cards whether there is a copy $H'$ of $H$ in $G$ for which $|V(B_1(H',G))|\geq \ell$. Say that a graph $H$ is \emph{confined} if no such copy of it exists. Then for every confined connected graph $H$ and every leaf $H$-extension $\Hext$ of $H$, we can apply Lemma \ref{lem:maximal_IE} to reconstruct $m_1(\Hext,G)$. Recall that this is the number of copies of $H$ in $G$ whose $1$-ball in $G$ is obtained by adding a pendant vertex connected at a specified vertex, so a positive value would signal an extension that might correspond to a component of $G-e$ (with the extending edge corresponding to the bridge).  
To form our collection of candidates, let $\calHext$ denote the isomorphism classes of all leaf $H$-extensions $\Hext$ for which $m_1(\Hext,G)>0$ and $H$ is a confined connected graph.

We now loop over all pairs $(\Cext,\Dext)$ of elements from $\calHext$ for which $|\Cext|+|\Dext| = n + 2$ and $|\Cext| \leq |\Dext|$. Let $\Cext = (C^+, V_C)$ and $\Dext = (D^+, V_D)$ where $C = C^+[V_C]$ and $D= D^+[V_D]$ denote the corresponding labelled subgraphs. Let $N(\Cext, \Dext)$ be the number of copies of $C$ in $D$ whose $1$-ball in $D^+$ is a copy of $C^+$. That is, we count the copies of $C^+$ in $D^+$ where the extending edge of $D^+$ is either unused or is the extending edge of $C^+$. If $m_1(\Cext, G) \geq N(\Cext, \Dext) + 1$, then the algorithm outputs the graph $G'$ formed by taking disjoint copies of $C^+$ and $D^+$ and identifying their extending edges as given by these extensions. If $m_1(\Cext, G) < N(\Cext, \Dext) + 1$, we continue on to the next pair of elements of $\calHext$.  If we have checked every suitable pair of elements from $\calHext$ without outputting a graph, then we output a failure.

Let us first verify that if the algorithm returns a graph, it must be isomorphic to $G$. In fact, this is true for any connected graph. We will later use our assumptions on $G$ to argue that the algorithm does output a graph when the input is $\mathcal{D}_\ell(G)$, which shows that $G$ is reconstructible.

It is useful to highlight that every leaf extension $\Dext = (D^+, V_D)$ with $m_1(\Dext,G)>0$ has a unique \emph{partner} leaf extension, which we will denote by $\Dext^c = ((D^c)^+, V_{D^c})$, that produces a graph isomorphic to $G$ when joined with $\Dext$ as described above. Explicitly, if $e_D$ is the extending edge in $\Dext$, then $\Dext^c$ is given by taking $V_{D^c} = (V_D)^c$ and setting $(D^c)^+$ to be the complement of $D$ in $G$ together with $e_D$ as an additional edge. Indeed, this is true when $G$ is any connected graph.

We now argue that any output graph $G'$ is isomorphic to $G$. Let $(\Cext, \Dext)$ be a pair that produces $G'$. If $\Cext \cong \Dext^c$ as leaf extensions then $G\cong G'$ by definition of $\Dext^c$, so suppose this is not the case. It is enough to show that $m_1(\Cext, G) < N(\Cext, \Dext) + 1$, giving a contradiction to the fact that we terminated when considering $(\Cext,\Dext)$. We first claim that if a copy of $C$ contributes to $m_1(\Cext, G)$ (in the sense that $G[V_C]$ coincides with this copy for a leaf extension counted by $m_1(\Cext, G)$), then it cannot use the edge between $D$ and $D^c$. To see this, note that since $G$ is connected, both $D$ and $D^c$ are connected. We have assumed that $|V(C)| = |V(D^c)| \leq |V(D)|$ and $\Cext \not\cong \Dext^c$, so no copy of $C$ can completely contain either $D$ or $D^c$. This means that if a copy of $C$ were to use this edge, then its 1-ball would contain at least one vertex from $D$ and one from $D^c$ meaning it does not contribute to  $m_1(\Cext, G)$.

It now follows that 
\begin{equation}\label{eq:m1nobridge}
m_1(\Cext, G) = N(\Cext, \Dext) + N(\Cext, \Dext^c).
\end{equation}
 If $\Dext^c \not\cong \Cext$ as we have assumed, then $N(\Cext, \Dext^c) = 0$. Indeed, since no copy of $C$ uses the extending edge of $(D^c)^+$, this would have to be the extending edge of $C^+$ and we would have $\Cext \cong \Dext$. This leaves $m_1(\Cext, G) = N(\Cext, \Dext)$ which gives the desired contradiction to the supposition that we terminated when considering $(\Cext, \Dext)$, so $G'$ must be isomorphic to $G$. 

Finally, let us argue that the algorithm does terminate when the input is $\mathcal{D}_\ell(G)$. From \eqref{eq:m1nobridge} we easily see that $m_1(R_{\textrm{ext}}, G) = N(R_{\textrm{ext}}, R^c_{\textrm{ext}}) + 1$, where $R_{\textrm{ext}}$ and $R^c_{\textrm{ext}}$ are as defined earlier. Our assumption on $G$ guarantees that both of these are in$\calHext$, and we are guaranteed to have at least one pair among our candidates that will lead to termination.
\end{proof}

We remark that the only place where we used that the existence of the edge $e$ which splits $G$ into ``nice" components $R$ and $R^c$ was to ensure that the algorithm output a graph. One can try to use the algorithm to reconstruct graphs whenever the deck is known to correspond to a connected graph, and the algorithm will either output the graph, or a failure (in which case one needs a different approach). 
We now show that any tree with large enough diameter (depending on both $n$ and $\ell$) does have a bridge which splits the tree into ``nice" components, and so satisfies the required condition to be reconstructible amongst connected graphs.

\begin{proof}[Proof of Lemma \ref{lem:high_diam}]
Let $k,\ell\in [n]$ with $k > 4\sqrt{\ell} + 2(n-\ell)$ and $\ell \geq \frac{2n}{3} + \frac{4}{9} \sqrt{6n + 7} + \frac{11}{9}$. Let $T$ be a tree and suppose that a longest path in $T$ contains exactly $k$ vertices. We wish to show it has a suitable bridge satisfying the assumptions of Lemma \ref{lem:bridge} so that we can conclude that it is reconstructible from its $\ell$-deck amongst connected graphs.

Fix a longest path in $T$ with $k$ vertices. Create two rooted subtrees $R$ and $S=R^c$ by removing the central edge of the path if $k$ is even, or one of the two central edges if $k$ is odd (and rooting the subtrees at the vertex which had an incident edge removed). By Lemma \ref{lem:bridge}, if $T$ has no induced subgraph $H$ isomorphic to $R$ or $S$ with $|V(B_1(H,T))|\geq \ell$, then $T$ is reconstructible from $\mathcal{D}_\ell(T)$. We assume, in order to derive a contradiction, that $T$ contains a copy $S'$ of $S$ with $|V(B_1(S',T))| \geq \ell$.  Note that, since $R$ contains at least $n - \ell + 2\sqrt{\ell} - 1$ vertices, $S$ contains at most $\ell - 2$ vertices and the 1-ball of $S$ contains at most $\ell - 1$ vertices.

We will proceed by building a sequence of vertex-disjoint paths in $S$ to obtain a lower bound on the size of $S$, which leads to an upper bound on the maximum length of a path in $R$ and hence also an upper bound on $k$. Let us sketch the main idea. Since the 1-ball around $S'$ only extends one vertex further along paths not in $S'$, the existence of a long path in $T$ that is not in $S'$ would indicate that this $1$-ball misses lots of vertices and so cannot be too big. This means that $S'$ should contain a lot of our chosen longest path, and $S'$ should reach a long way into $R$. However, the long path in $S'$ that reaches into $R$ corresponds to a long path in $S$ (under the isomorphism that makes it a copy), and $S'$ must also contain many vertices from this path. These form another long path in $S$, and $S'$ must reach a long way down this path as well. Continuing this argument eventually forces $S$ to be so large that there are not enough vertices remaining to form a path of sufficient length in $R$ for our assumption on $k$ to be true.

Set $r=n-\ell$. Proceeding as laid out above, let $\varphi: S \to S'$ be an isomorphism, and let $P_0$ be a path in $R$ containing at least $(k-1)/2$ vertices which starts at the root of $R$. Consider the intersection of $S'$ with the path $P_0$. Since $V(S') \neq V(S)$, this intesection must be non-empty, and it must be connected since both $T$ and $S$ are trees, so $S'$ and $P_0$ intersect on a subpath $Q_0$. Moreover, the intersection of $B_1(S', T)$ and $P_0$ must also be a path with at most $|V(Q_0)| + 2$ vertices. 
This means that there are at least $|V(P_0)| - |V(Q_0)|- 2$ vertices on $P_0$ which are not in $B_1(S', T)$. 
We are assuming that $T$ has at most $r$ vertices which are not in $B_1(S',T)$, so $|V(Q_0)| \geq |V(P_0)| - r - 2$.

Now let $P_1$ be the path $\varphi^{-1}(V(Q_0))$ in $S$ and note that $P_1$ is vertex-disjoint from $P_0$ as $P_0$ is contained in $R$. Define $Q_1$ to be the intersection of $S'$ with $P_1$, which is again a path. Furthermore, the intersection of $B_1(S', T)$ and $P_1$ is also a path, this time with at most $|V(Q_1)| + 2$ vertices. The number of vertices of $P_1$ and $P_0$ which are not in $B_1(S',T)$ is at least $|V(P_0)| + |V(P_1)| - |V(Q_0)| - |V(Q_1)| - 4$, which gives the inequality $|V(Q_0)| + |V(Q_1)| \geq |V(P_0)| + |V(P_1)| - r- 4$. Since $|V(Q_0)| = |V(P_1)|$, this becomes $|V(Q_1)| \geq |V(P_0)| - r - 4$. 

We now continue to iteratively build our sequence of paths $P_i$, together with the sequence of subpaths $Q_i$ restricted to $S'$, as follows: given $P_i$ and $Q_i$, let $P_{i+1}:= \varphi^{-1}(V(Q_i))$ and set $Q_{i+1} = P_{i+1} \cap S'$. 
We first note that $P_{i+1}$ is disjoint from $P_0, \dots, P_i$. Indeed, since $P_0$ is contained in $R$,  $P_{i+1}$ cannot intersect $P_0$. If $P_{i+1}$ intersects a path $P_j$, then $Q_i$ must intersect $Q_{j-1}$ which in turn implies $P_{i}$ intersects $P_{j-1}$. Hence, the paths are disjoint by induction. By the finiteness of $T$, we must eventually reach a $j$ such that $|V(Q_{j-1})| = |V(P_j)| = 0$. At this point, we have disjoint paths $P_1, \dots, P_j$ in $S$ that satisfy $|V(P_i)| = |V(Q_{i-1})| \geq |V(P_0)| -r - 2i$ for all $i=1,\ldots,j$. In particular, setting $i=j$ to use the fact that $|V(P_j)| = 0$ shows that $j \geq (|V(P_0)|  -r)/2$. We may then calculate
\begin{align*}
    |V(S)| &\geq |V(P_1)| + \dotsb + |V(P_{j})|\\
    &\geq  \sum_{i=1}^{\floor{(|P_0|  -r)/2}} \left(|P_0| - r - 2i\right) \\
    &= (|P_0| - r) \floor{\frac{|P_0| - r}{2}} - 2 \binom{\floor{(|P_0| - r)/2} + 1}{2}\\
    &= \floor{\frac{|P_0| - r}{2}} \ceil{\frac{|P_0| - r - 2}{2}}\\
    &\geq \frac{(|P_0| - r)(|P_0| - r - 2)}{4},
\end{align*}
where we have used $|P_0|$ as shorthand for $|V(P_0)|$.

Since $|V(S)| \leq n - |V(P_0)|$, we must have $|V(P_0) | \leq \sqrt{4n - 4r + 1} + r - 1$ and $k \leq 2|V(P_0)|+1\leq 2 \sqrt{4n - 4r +1} + 2r - 1$. Finally, note that $2\sqrt{x + 1} - 1 \leq 2\sqrt{x}$ for all $x \geq 1$ to find $k \leq 4 \sqrt{\ell} + 2r$, a contradiction. The same argument shows that $T$ has no copy $R'$ of $R$ with $|V(B_1(R',T))|\geq \ell$. Hence, by Lemma \ref{lem:bridge} we can reconstruct $T$ from $\mathcal{D}_\ell(T)$.
\end{proof}

\subsection{Low diameter} 
\label{subsec:low_diam}

The purpose of this section is to prove Lemma~\ref{lem:low_diam}. Specifically, we will show that any tree $T$ with diameter $k-1$ can be reconstructed from its $\ell$-deck for any $\ell\in [n]$ such that $ n-\ell < \frac{n - 3k + 1}{3}$ if $k$ is odd or $n-\ell < \frac{n-3k -1}{3}$ if $k$ is even, which together imply the statement directly. These conditions are equivalent to odd $k < \ell - \frac{2n-1}{3}$ and even $k<\ell - \frac{2n+1}{3} $.
The reason for dependence on the parity is that, broadly, our strategy for reconstruction is to separately reconstruct branches of the tree emanating from its \emph{centre}: if $k$ is odd, the centre of $T$ is the vertex in the middle of each longest path, and if $k$ is even, the centre consists of the two middle vertices. The centre is unique, so in particular it does not depend on the choice of longest path. 

The case when $k$ is odd (so the diameter is even) is easier to work with, and the follow lemma provides a simple reduction that will allow us to proceed with this assumption.
 
\begin{lemma}\label{lem:parityreduction}
If all trees with $n+1$ vertices and even diameter are reconstructible from the cards in their $(\ell+1)$-decks that contain a longest path, then all trees with $n$ vertices and odd diameter are reconstructible from the cards in their $\ell$-decks that contain a longest path.
\end{lemma}
\begin{proof}
Let $T$ be any tree with $n$ vertices and odd diameter. This means that it has two middle vertices joined by one central edge. Let $T'$ be the tree obtained by subdividing the central edge of $T$, noting that it has $n+1$ vertices and even diameter. We can obtain the cards in the $(\ell+1)$-deck of $T'$ that contain a longest path by taking the cards in the $\ell$-deck of $T$ that contain a longest path and subdividing the central edge, and thus reconstruct $T'$ by assumption. It is then straightforward to recover $T$ by recognising the central vertex in $T'$ and smoothing out the vertex created by subdivision.
\end{proof}

Let us assume for the remainder of this section that $T$ is a tree with $n$ vertices, the number $k$ of vertices in a longest path in $T$ is odd, and $k < \ell - \frac{2n-1}{3}$.
This means that $k+1\leq \ell$ so we can reconstruct $k$ from the $\ell$-deck, which we shall use freely, and that $T$ has a unique central vertex. 

Given a vertex $u \in T$ with neighbours $v_1, v_2, \dots, v_a$, let the \emph{branches} at $u$ be the rooted subtrees $B_1,B_2, \dots, B_a$ where $B_i$ is the component of $T-u$ that contains $v_i$, rooted at $v_i$.
An \emph{end-rooted path} is a path rooted at an endvertex of the path. In this section, all longest paths $P_k$ will be rooted at the central vertex $c$, and are hence not end-rooted, whilst all of the shorter paths mentioned will be end-rooted. 
Given two rooted trees $T_1$ and $T_2$ with roots $u$ and $v$ respectively, let $T_1 \frown T_2$ denote the (unrooted) tree given by adding an edge between $u$ and $v$ (see Figure \ref{fig:def:frown}).
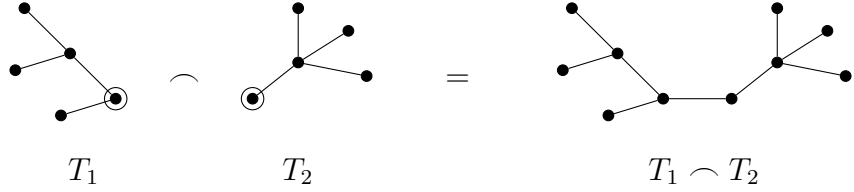
\begin{figure}
\centering
    \scalebox{0.85}{\begin{tikzpicture}[xscale=0.6, yscale=0.6]
        \draw node[style=vertex](0) at (-0.2,-0.37) {};
        \draw node[style=vertex](1) at (1,0) {};
        \draw node[style=vertex](2) at (0,1) {};
        \draw node[style=vertex](3) at (-1.2,0.63) {};
        \draw node[style=vertex](4) at (-1,2) {};
        
        \draw node[style=vertex](5) at (4,0) {};
        \draw node[style=vertex](6) at (5,0.8) {};
        \draw node[style=vertex](7) at (5,2) {};
        \draw node[style=vertex](8) at (6.1,1.5) {};
        \draw node[style=vertex](9) at (6.5,0.5) {};
        
        \draw node[style=vertex](10) at (11.8,-0.37) {};
        \draw node[style=vertex](11) at (13,0) {};
        \draw node[style=vertex](12) at (12,1) {};
        \draw node[style=vertex](13) at (10.8,0.63) {};
        \draw node[style=vertex](14) at (11,2) {};
        \draw node[style=vertex](15) at (15.5,0.8) {};
        \draw node[style=vertex](16) at (15.5,2) {};
        \draw node[style=vertex](17) at (16.6,1.5) {};
        \draw node[style=vertex](18) at (17,0.5) {};
        \draw node[style=vertex](19) at (14.5, 0) {};
        
        \node [fill=white] at (2.5, 0.4) {$\frown$};
        \node [fill=white] at (8.5, 0.4) {$=$};
        \draw [black] (1,0) circle (7pt);
        \draw [black] (4,0) circle (7pt);
        \node [fill=white] at (0.3, -1.6) {$T_1$};
         \node [fill=white] at (5, -1.6) {$T_2$};
         \node [fill=white] at (13.9, -1.6) {$T_1\frown T_2$};
        
        \draw (0) -- (1) -- (2) -- (3);
        \draw (2) -- (4);
        \draw (5) -- (6) -- (7);
        \draw (8) -- (6) -- (9);
        
        \draw (10) -- (11) -- (12) -- (13);
        \draw (12) -- (14);
       	\draw (11) -- (19) -- (15) -- (16);
	\draw (17) -- (15) -- (18);
    \end{tikzpicture}}
    \caption{An example of the tree grafting operation $T_1\frown T_2$ .}
    \label{fig:def:frown}
\end{figure}

By restricting our attention to the cards that have diameter $k - 1$, we may assume that we can always identify the centre of the graph. Our basic strategy is to reconstruct the branches at the centre separately, knowing that we can later join them together using the centre as a common point of reference. This can be done via a counting argument when all branches at the centre have at most $\ell-k$ vertices, but when one branch is `heavy' and contains many (at least $\ell-k$) of the vertices a slightly more finicky version of the argument is required to reconstruct this branch as it cannot be seen on a single card. It is possible to recognise these cases from the $\ell$-deck. We first address the simpler situation without heavy branches to illustrate the method.

%%%%%

\begin{lemma}\label{lemma:allsmall}
If $T$ is a tree with even diameter $k - 1$ for which every branch from the centre has fewer than $\ell-k$ vertices, then $T$ is reconstructible from the subset of the $\ell$-deck consisting only of cards that contain a copy of $P_k$.
\end{lemma}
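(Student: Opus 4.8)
\textbf{Proof proposal for Lemma~\ref{lemma:allsmall}.}

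The plan is to reconstruct each branch at the centre $c$ together with a count of how many times it occurs, and then glue the branches back together at $c$. Since we restrict to cards containing a copy of $P_k$, and $k+1\le \ell$, every such card displays the centre $c$; more precisely, on a card containing $P_k$ the longest path still has $k$ vertices (it cannot be longer, since $T$ has diameter $k-1$), so the centre of the card coincides with the image of $c$. Thus on each relevant card we see $c$ together with some of its branches (truncated or whole). The key object to count will be, for each rooted tree $B$ on $b\le \ell-k$ vertices, the number of copies of a suitable $P_k$-extension that have $B$ as a whole branch hanging off the centre. The main tool is Lemma~\ref{lem:maximal_IE} applied with $H=P_k$ (rooted appropriately at $c$) and a ball radius $d$ large enough to capture a whole branch --- taking $d=\ell-k$ suffices since every branch has fewer than $\ell-k$ vertices, so every vertex of $T$ lies within distance $\ell - k$ of (an endvertex, hence the whole of) a fixed longest path once we note that $n - \ell < \frac{n-3k+1}{3}$ forces $n$ to be not much larger than $3k$, making all the relevant $d$-balls fit on a card. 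Condition~1 of Lemma~\ref{lem:maximal_IE} then fails for size reasons, so Condition~2 gives us the extension counts $m_d(H_e,T)$ from the $\ell$-deck.

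Concretely, I would proceed as follows. First, using Kelly's Lemma (Lemma~\ref{lem:Kelly}) reconstruct $k$ and hence whether the diameter is even; then identify the centre on every card of diameter $k-1$ as above. Second, for the fixed rooted path $H=P_k$ rooted at its centre $c$, and for $d$ chosen as above, verify that no copy of $P_k$ in $T$ has a $d$-ball of order $\ge \ell$ --- this is where the hypothesis $n-\ell<\frac{n-3k+1}{3}$ is used, giving $n \le \ell + d - O(1)$ roughly, so that the $d$-ball of a central path, which is all of $T$, has at most $\ell - 1$ vertices. Wait --- that would make the $d$-ball equal to all of $T$, which has $n \ge \ell$ vertices, a contradiction; instead one chooses $d$ just below $\ell - k$ so that a $d$-ball around a fixed longest path, together with the path's $k$ vertices, still omits at least one vertex whenever $n \ge \ell$. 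The careful choice of $d$ so that every relevant ball has at most $\ell-1$ vertices is exactly the bookkeeping that the inequality $r < (n-3k+1)/3$ is designed to make work, and I would state it precisely and check it. Third, with $m_d(H_e,T)$ in hand for all $P_k$-extensions $H_e$, read off for each isomorphism type of rooted tree $B$ the multiplicity with which $B$ occurs as a branch at $c$: a branch $B$ of $T$ at $c$ contributes a specific local structure to the $d$-ball of every copy of $P_k$ through $c$ avoiding $B$, and since all branches are small the $d$-ball determines the full multiset of branches. Finally, reassemble: $T$ is obtained by taking the centre vertex $c$ and attaching, with the recovered multiplicities, each rooted branch $B$ by an edge from $c$ to the root of $B$.

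The main obstacle I anticipate is the second step: pinning down the right radius $d$ and proving that \emph{every} $d$-ball of \emph{every} copy of $P_k$ in $T$ has at most $\ell-1$ vertices, so that Lemma~\ref{lem:maximal_IE} Condition~2 genuinely applies. This requires exploiting the low-diameter hypothesis quantitatively --- controlling how much of $T$ can lie within distance $d$ of a longest path, which in a tree of diameter $k-1$ with all branches at the centre of size $<\ell-k$ is governed by the inequality $r=n-\ell<\frac{n-3k+1}{3}$. A secondary subtlety is ensuring that \emph{spurious} copies of $P_k$ in $T$ (longest paths not through $c$ do not exist, but there may be several longest paths through $c$) and spurious copies inside the reconstructed extensions are handled consistently by the inclusion--exclusion already built into Lemma~\ref{lem:maximal_IE}; since that lemma is stated for arbitrary $H$-extensions this should go through, but it needs to be checked that distinguishing the centre does not break the counting. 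Once the counts of branches are established, the gluing step is immediate because the centre is a canonical reference point, so no further obstacle arises there.
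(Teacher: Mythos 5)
There is a genuine gap, and it is the one you flagged yourself mid-argument but did not resolve: no choice of radius $d$ makes your plan work. For Lemma~\ref{lem:maximal_IE} (with $H=P_k$) to be usable you need \emph{every} copy of $P_k$ in $T$ to have a $d$-ball on fewer than $\ell$ vertices; for the extension counts to determine the branches you need the $d$-ball to contain each branch \emph{in its entirety}, which forces $d$ to be at least the maximum depth of a branch, i.e.\ $d\geq (k-1)/2$ (the two halves of a longest path are themselves branches of depth exactly $(k-1)/2$). But in a tree of even diameter $k-1$ every vertex is within distance $(k-1)/2$ of the centre, and every copy of $P_k$ passes through the centre, so as soon as $d\geq(k-1)/2$ the $d$-ball of every copy of $P_k$ is all of $T$, which has $n\geq\ell+1$ vertices whenever there is anything to prove. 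Thus Condition~1 of Lemma~\ref{lem:maximal_IE} holds and Condition~2 is simply not available; conversely, any $d<(k-1)/2$ truncates the deep branches and the extension counts no longer determine them. The hypothesis $r=n-\ell<\frac{n-3k+1}{3}$ cannot rescue this: consider a spider with centre $c$ and many legs of length $2$ (so $k=5$), where all branches are tiny yet any $d\geq 2$ already gives ball $=T$ while $n-\ell$ can be linear in $n$. So the ``careful choice of $d$'' you defer to does not exist, and the core counting step of your proposal fails.

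The paper avoids exactly this obstruction by never asking for all branches at once: it applies the maximal-subgraph count (Lemma~\ref{lem:BH_counting}) to families of graphs of the form $P_k\frown S$ and $P_{(k-1)/2+1}\frown S$, i.e.\ one longest path (or half-path) together with a \emph{single} branch, each of which has fewer than $k+(\ell-k)=\ell$ vertices and hence fits on a card. Each non-path branch $b$ is then counted once per copy of $P_k$ (splitting into paths disjoint from $b$ and paths through $b$), and dividing by $n_{P_k}(T)$ gives its multiplicity; branches that are end-rooted paths need a separate iterative argument (comparing counts of long paths in $T$ with those in the partially rebuilt tree), a case your proposal also does not address, since for $S$ a path the decomposition of $P_k\frown S$ into ``path plus branch'' is ambiguous. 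If you want to salvage your write-up, you should replace the $d$-ball/extension step with a pairing of the path with one branch at a time in this spirit.
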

\begin{proof}
Let $c$ be the central vertex of $T$, and let $\mathcal{B}=\{B_1, \dots, B_a\}$ be the branches at $c$ that we wish to reconstruct. 
If one of the branches at $c$ has at least $\ell-k$ vertices, then there must be a card containing a longest path with a branch of at least $\ell-k$ vertices (the branch and the path need not be disjoint, but their union contains at most $\ell$ vertices).
Thus we can recognise from the $\ell$-deck that all branches in $\mathcal{B}$ have fewer than $\ell-k$ vertices.

We first reconstruct all branches that are not end-rooted paths. For any fixed $B$ which is a rooted tree but not an end-rooted path, we will use Lemma~\ref{lem:BH_counting} to count each branch at $c$ isomorphic to $B$ once for every $P_k$ in $T$. Dividing this number, denoted $N_{B}$, by the number $n_{P_k}(T)$ of copies of $P_k$ in $T$ then tells us the multiplicity of $B$ in $T$ (which may be zero). Note that $n_{P_k}(T)$ can be determined by Kelly's Lemma as $k < \ell$, so it suffices to reconstruct $N_{B}$.

Following the preceding outline, fix $B$ to be any rooted tree that is not an end-rooted path. We will actually determine $N_{B}$ in two parts. Let $\pi_{B}$ be the number of pairs consisting of one copy $B'$ of $B$ that is a branch at $c$, and one copy $P_k'$ of a longest path that is disjoint from $B'$. Similarly, let $\tau_{B}$ count pairs $(B',P_k')$ where the copy $P_k'$ intersects $B'$. It is clear that $N_{B} = \pi_{B}+\tau_{B}$.

We begin with $\pi_{B}$. Let $\mathcal{G}$ be the family of all $n$-vertex trees with diameter $k - 1$ and where all branches from the centre have fewer than $\ell-k$ vertices. Let $\mathcal{F}$ be the family of graphs of the form $P_k \frown S$, where $S$ is a non-empty rooted tree with less than $\ell-k$ vertices that is not an end-rooted path and $P_k$ is rooted at its central vertex (see Figure~\ref{fig:families}). Fix $G \in \mathcal{G}$ and consider some $F \in \mathcal{F}$. If $F' = P_k' \frown S'$ is a copy of $F$ in $G$, then it is contained in a unique maximal $\mathcal{F}$-subgraph, namely $P_k'$ together with the unique branch $B'$ containing $S'$. Note that this would not be true if end-rooted paths were allowed, since the resulting $F'$ might then also be contained in a different maximal $\mathcal{F}$-subgraph $P''_k \frown B''$ where $S'$ is contained in the $P''_k$ and $B''$ is a branch that contains half of the original $P_k'$. Also, since $B'$ has fewer than $\ell-k$ vertices, these maximal elements have fewer than $\ell$ vertices and are therefore in $\mathcal{F}$. Thus, by Lemma~\ref{lem:BH_counting} we can reconstruct the number of $\mathcal{F}$-maximal copies of each $F$ in $G$ from $\mathcal{D}_\ell(G)$. This is non-zero for $F=P_k \frown S$ if and only if $\pi_{S}\neq 0$.

Now let $F = P_k \frown B$. Since $T\in \mathcal{G}$ and $F\in \mathcal{F}$, we may reconstruct the number of $\mathcal{F}$-maximal copies of $F$ in $T$ as above. This is precisely $\pi_{B}$. To see this, consider a particular copy $B'$ of $B$ that occurs as a branch and observe that $F$ occurs as a maximal $\mathcal{F}$-subgraph with this $B'$ as the copy of $B$ once for every longest path in the tree which avoids $B'$.

There is a similar argument to determine $\tau_{B}$. Keeping $\mathcal{G}$ as before, let $\mathcal{F}'$ be the family of graphs of the form $P_{(k-1)/2+1} \frown S$ where $S$ is a rooted tree which contains an end-rooted $P_{(k-1)/2}$, but is not itself an end-rooted path. Again, an element $F=P_{(k-1)/2+1} \frown S$ is $\mathcal{F}'$-maximal when $S$ is an entire branch, and for any $G \in \mathcal{G}$ and $F \in \mathcal{F}'$ we can reconstruct the number of $\mathcal{F}'$-maximal copies of each $F$ in $G$ by Lemma~\ref{lem:BH_counting}. This time there is at least one $\mathcal{F}'$-maximal copy of $F=P_{(k-1)/2+1} \frown S$ if and only if $G$ has a branch isomorphic to $S$ (although we do not need to use both directions explicitly).

Let $m_{F'}$ be the number of $\mathcal{F}'$-maximal copies of $F'=P_{(k-1)/2 + 1} \frown B$ in $T$, which we can reconstruct as argued above. A particular copy $B'$ of $B$ that occurs as a branch contributes 1 to $m_{F'}$ for each copy of $P_{(k-1)/2 + 1}$ that starts at the central vertex $c$ and is disjoint from $B'$. Thus, letting $n_{P^\bullet}(B)$ be the number of end-rooted copies of $P_{(k-1)/2 + 1}$ in $B'$ with roots coinciding (this is the same for any copy of $B$ and does not depend on the deck), one can construct all of the copies of longest paths that intersect $B'$ by gluing together one $P_{(k-1)/2 + 1}$ from inside $B'$ and one that is disjoint from it. Doing so for every copy of $B$ shows that we can reconstruct $\tau_{B}=m_{F'}\cdot n_{P^\bullet}(B)$.
The number of copies of $B$ that occur as a branch at $c$ can then be reconstructed as 
\[\frac{N_{B}}{n_{P_k}(T)}=\frac{\pi_{B}+\tau_{B}}{n_{P_k}(T)}.\]

It remains to determine the number of branches isomorphic to an end-rooted path $P_i$, which we do using the fact that we know all of the other branches not of this form. Starting with $j = (k-1)/2$, this being the maximum possible length of a path branch, we compare the number of copies of $P_{(k-1)/2+j + 1}$ in $T$ to the number of copies in the graph $\widetilde{T}$ obtained by gluing all of the known branches at a single vertex $c$. The former count can be obtained by Kelly's Lemma, and the latter by directly inspecting $\widetilde{T}$. If there are more copies in $T$ than in the current $\widetilde{T}$, then there must be at least one more end-rooted $P_j$ as a branch so we add one copy to our list of known branches. We then repeat this step with the same $j$ but a new $\widetilde{T}$ updated to include this new path branch. If the counts match, meaning all copies of $P_{(k-1)/2+j + 1}$ in $G$ are already present in $\widetilde{T}$, then reduce $j$ by 1 and continue iteratively until $j=0$. Note that it is important that we handle the different path lengths in this order. At this point, we have reconstructed all branches and the final $\widetilde{T}$ is exactly $T$.
\end{proof}

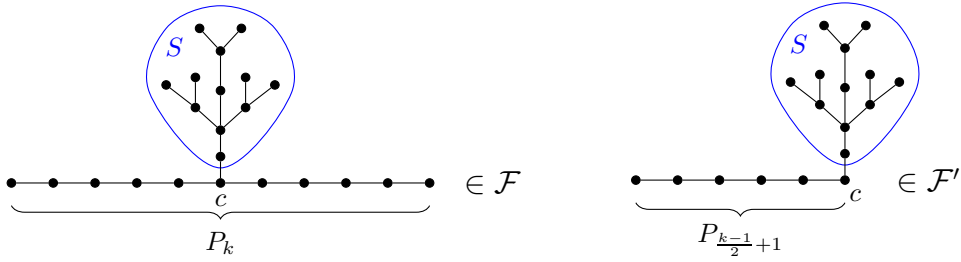
\begin{figure}
\tikzstyle{vertex}=[circle, draw, fill=black, inner sep=0pt, minimum width=3.2pt]
\centering
\begin{subfigure}{0.55 \textwidth}
    \centering
    \scalebox{0.85}{\begin{tikzpicture}[xscale=0.55, yscale=0.5]
         \node [fill=none] at (0, -0.45) {\footnotesize $c$};
         \node [fill=none] at (-1.1, 3.6) {\footnotesize \color{blue}$S$};
         \draw [blue] plot [smooth cycle, tension=0.6] coordinates {(0,0.4) (-1.7,2.4) (-1.3,4.1) (0, 4.7) (1.3,4.1) (1.7,2.4)};
    	\draw [decorate,decoration={brace,amplitude=5pt},xshift=0pt,yshift=0pt] (5,-0.6) -- (-5,-0.6) node [black,midway,xshift=0pt,yshift=-14pt]{\footnotesize $P_k$};
	\node [fill=none] at (6.5, 0) {$\in \mathcal{F}$};
    
        \draw node[style=vertex](0) at (-5,0) {};
        \draw node[style=vertex](1) at (-4,0) {};
        \draw node[style=vertex](2) at (-3,0) {};
        \draw node[style=vertex](3) at (-2,0) {};
        \draw node[style=vertex](4) at (-1,0) {};
        \draw node[style=vertex](c) at (0,0) {};
        \draw node[style=vertex](6) at (1,0) {};
        \draw node[style=vertex](7) at (2,0) {};
        \draw node[style=vertex](8) at (3,0) {};
        \draw node[style=vertex](9) at (4,0) {};
        \draw node[style=vertex](10) at (5,0) {};
        
        \draw node[style=vertex](11a) at (0, 0.7) {};
        \draw node[style=vertex](11) at (0,1.4) {};
        \draw node[style=vertex](12) at (-0.6,2) {};
        \draw node[style=vertex](13) at (0.6,2) {};
        \draw node[style=vertex](14a) at (0, 2.45) {};
        \draw node[style=vertex](14) at (0,3.5) {};
        \draw node[style=vertex](15) at (-1.3,2.6) {};
        \draw node[style=vertex](16) at (1.3,2.6) {};
        \draw node[style=vertex](17) at (-0.6,2.8) {};
        \draw node[style=vertex](18) at (0.6,2.8) {};
        \draw node[style=vertex](19) at (-0.5,4.1) {};
        \draw node[style=vertex](20) at (0.5,4.1) {};      
        
        \draw (0) -- (1) -- (2) -- (3) -- (4) -- (c) -- (6) -- (7) -- (8) -- (9) -- (10);
        \draw (c) -- (11a) -- (11) -- (12) -- (15);
        \draw (11) -- (13) -- (16);
        \draw (11) -- (14a) -- (14) -- (19);
        \draw (12) -- (17);
        \draw (13) -- (18);
        \draw (14) -- (20);
    \end{tikzpicture}}
    \end{subfigure}%
\begin{subfigure}{0.45 \textwidth}
    \centering
        \scalebox{0.85}{\begin{tikzpicture}[xscale=0.55, yscale=0.5]
         \node [fill=none] at (0.25, -0.4) {\footnotesize $c$};
         \node [fill=none] at (-1.1, 3.6) {\footnotesize \color{blue}$S$};
         \draw [blue] plot [smooth cycle, tension=0.6] coordinates {(0,0.4) (-1.7,2.4) (-1.3,4.1) (0, 4.7) (1.3,4.1) (1.7,2.4)};
    	\draw [decorate,decoration={brace,amplitude=5pt},xshift=0pt,yshift=0pt] (0,-0.6) -- (-5,-0.6) node [black,midway,xshift=0pt,yshift=-14pt]{\footnotesize $P_{\frac{k-1}{2}+1}$};
	\node [fill=none] at (2, 0) {$\in \mathcal{F'}$};
    
        \draw node[style=vertex](0) at (-5,0) {};
        \draw node[style=vertex](1) at (-4,0) {};
        \draw node[style=vertex](2) at (-3,0) {};
        \draw node[style=vertex](3) at (-2,0) {};
        \draw node[style=vertex](4) at (-1,0) {};
        \draw node[style=vertex](c) at (0,0) {};
        
        \draw node[style=vertex](11a) at (0, 0.7) {};
        \draw node[style=vertex](11) at (0,1.4) {};
        \draw node[style=vertex](12) at (-0.6,2) {};
        \draw node[style=vertex](13) at (0.6,2) {};
        \draw node[style=vertex](14a) at (0, 2.45) {};
        \draw node[style=vertex](14) at (0,3.5) {};
        \draw node[style=vertex](15) at (-1.3,2.6) {};
        \draw node[style=vertex](16) at (1.3,2.6) {};
        \draw node[style=vertex](17) at (-0.6,2.8) {};
        \draw node[style=vertex](18) at (0.6,2.8) {};
        \draw node[style=vertex](19) at (-0.5,4.1) {};
        \draw node[style=vertex](20) at (0.5,4.1) {};        
        
        \draw (0) -- (1) -- (2) -- (3) -- (4) -- (c);
        \draw (c) -- (11a) -- (11) -- (12) -- (15);
        \draw (11) -- (13) -- (16);
        \draw (11) -- (14a) -- (14) -- (19);
        \draw (12) -- (17);
        \draw (13) -- (18);
        \draw (14) -- (20);
    \end{tikzpicture}}
\end{subfigure}
    \caption{Elements of $\mathcal{F}$ and $\mathcal{F'}$.}
    \label{fig:families}
\end{figure}

\begin{figure}
\tikzstyle{vertex}=[circle, draw, fill=black, inner sep=0pt, minimum width=3.2pt]
    \centering
        \scalebox{0.85}{\begin{tikzpicture}[xscale=0.55, yscale=0.5]
        \node [fill=none] at (0.25, -0.4) {\footnotesize $c$};
        \node [fill=none] at (3.5, 3.6) {\footnotesize \color{blue}$B'$};
        \draw [decorate,decoration={brace,amplitude=5pt},xshift=0pt,yshift=0pt] (0,-0.6) -- (-5,-0.6) node [black,midway,xshift=0pt,yshift=-14pt]{\footnotesize $P_{\frac{k-1}{2}+1}$};
        \draw [blue] plot [smooth cycle, tension=0.7] coordinates {(0.4,0.45) (0.7,2.8) (2.5,4) (5.3, 4) (5.3, 2.4) (4,1.1) (2,0.6)};
    	 \draw[black] (0,0) -- (5,3.5);
    
    	%Base line
        \draw node[style=vertex](0) at (-5,0) {};
        \draw node[style=vertex](1) at (-4,0) {};
        \draw node[style=vertex](2) at (-3,0) {};
        \draw node[style=vertex](3) at (-2,0) {};
        \draw node[style=vertex](4) at (-1,0) {};
        \draw node[style=vertex](c) at (0,0) {};
        \draw node[style=vertex](6) at (1,0) {};
        \draw node[style=vertex](7) at (2,0) {};
        \draw node[style=vertex](8) at (3,0) {};
        \draw node[style=vertex](9) at (4,0) {};
        \draw node[style=vertex](10) at (5,0) {};
        
        %Tree part
        \draw node[style=vertex](11) at (2,1.4) {};
        \draw node[style=vertex](12) at (1.6,2) {};
        \draw node[style=vertex](13) at (2.2,2.4) {};
        \draw node[style=vertex](14) at (3,2.1) {};
        \draw node[style=vertex](15) at (0.9,1.9) {};
        \draw node[style=vertex](16) at (2.6,3.2) {};
        \draw node[style=vertex](17) at (1.2,2.7) {};
        \draw node[style=vertex](18) at (1.9,3.2) {};
        \draw node[style=vertex](19) at (4,2) {};
        \draw node[style=vertex](20) at (4,2.8) {};   
        \draw node[style=vertex](21) at (1,0.7) {}; 
        \draw node[style=vertex](22) at (5,3.5) {};
        
        %NW diagonal half-path  
        \draw node[style=vertex](23) at (-0.85, 0.5) {};
        \draw node[style=vertex](24) at (-1.7, 1) {};
        \draw node[style=vertex](25) at (-2.55, 1.5) {};
        \draw node[style=vertex](26) at (-3.4, 2) {};
        \draw node[style=vertex](27) at (-4.25, 2.5) {};
        
        \draw[black] (0) -- (1) -- (2) -- (3) -- (4) -- (c);
        \draw[black] (c) -- (6) -- (7) -- (8) -- (9) -- (10);
        \draw[black] (c) -- (23) -- (24) -- (25) -- (26) -- (27); 
        \draw (11) -- (12) -- (15);
        \draw (11) -- (13) -- (16);
        \draw (14) -- (19);
        \draw (12) -- (17);
        \draw (13) -- (18);
    \end{tikzpicture}}
    \caption{A tree containing three longest paths that avoid $B'$ (so $\pi_{B}= 3$), and three longest paths that use $B'$ consisting of a $P_{(k-1)/2+1}$ outside $B'$ and a $P_{(k-1)/2}$ inside (so $\tau_{B} = 3$).}
    \label{fig:counting}
\end{figure}

We now consider the case where one of the branches at the centre of $T$ has at least $\ell - k$ vertices. This is so many, in fact, that we can find a card showing all the other branches at the centre in their entirety, which then reduces the problem to reconstructing the large branch. In order to do this, we will move the ``centre" one step inside the branch and continue doing this until no branch at the new centre is too big. This collection of branches can be reconstructed by essentially applying the proof of the previous lemma with minor modifications. Importantly, the condition that $T$ has small diameter ensures that we do not have to take too many steps away from the true centre.

The following lemma sets up for this process. We shall call a branch \emph{$i$-heavy} if it contains at least $\ell - k -i$ vertices (a \emph{heavy} branch is 0-heavy), and say it is \emph{outward} if it does not contain the centre of the tree. When we wish to talk about a branch at a vertex within a specific card, we will call it a \emph{partial branch} to emphasise that it need to not be a branch of $T$. Recall that $r:=n-\ell$.

\begin{lemma}\label{lemma:branchrecog}
Let $T$ be a tree with even diameter $k - 1$ (where $k < \ell - \frac{2n-1}{3}$ is odd) and central vertex $c$, and suppose we are given only the cards in $\mathcal{D}_\ell(T)$ that contain a copy of $P_k$. For any $0\leq i\leq (k-1)/2$, 
\begin{enumerate}[label=(\roman*)]
\itemsep=0mm
\item each vertex can have at most one $i$-heavy branch;
\item there is at most one vertex $c_i$ at distance $i$ from $c$ with an $i$-heavy outward branch;
\item we can recognise whether there is a vertex $c_i$ at distance $i$ from $c$ with an $i$-heavy outward branch;
\item if there is such a $c_i$, then we can find a card among those we are given on which we can identify $c_i$ and the root of its $i$-heavy branch, and all smaller branches at $c_i$ are present in their entirety. In particular, we can completely determine the isomorphism classes of all of these smaller branches. 
\end{enumerate}
\end{lemma}
\begin{proof}
Since $i\leq \frac{k-1}{2}$ and $k < \ell - \frac{2n-1}{3}$ by assumption, we first deduce that 
\[\ell - k -i \geq \ell-\frac{3k-1}{2} > \ell - \frac{3\big(\ell-\frac{(2n-1)}{3}\big)-1}{2}= \frac{2n-\ell}{2} > \frac{n}{2}.\]
This proves (i), as the branches at a vertex are pairwise disjoint. Similarly, if two distinct vertices $c_i$ and $c'_i$ are both at distance $i$ from $c$, then the only branch at $c_i$ that can share a vertex with a branch at $c'_i$ is that containing $c$. Thus, the previous calculation also proves (ii). 

For (iii), suppose that $T$ does have a vertex $c_i$ at distance $i$ from $c$ with an $i$-heavy branch $B$ not containing $c$. The subtree formed by taking a $P_k$ together with the path of length $i$ from $c$ to $c'$ and any $(\ell-k-i)$-vertex subtree of $B$ containing the root has at most $k+i+(\ell-k-i) = \ell$ vertices. On the other hand, if a card $C$ has a subtree with a $P_k$ (allowing us to identify $c$) and a vertex $c_i$ at distance $i$ from $c$ with a partial outward branch that has at least $\ell-k-i$ vertices, then we would be done. It follows that $T$ has such a vertex $c_i$ and $i$-heavy branch if and only if it has a card containing a subtree of this form. 

Assuming that there exist $c_i$ and $B$ as above, we claim that the desired card in (iv) can be found as follows: from among the cards we have (all with a copy of $P_k$ so we can identify $c$), take a connected card $C$ in which the maximum number of vertices in any partial outward branch at any vertex with distance $i$ from $c$ is as small as possible. 
There are only $r + k + i$ vertices not in $B$, so $C$ must still see at least $\ell- r - k - i$ vertices of $B$. On the other hand, every other partial branch at $c_i$ has at most $r+k+i$ vertices, which is less than $\ell-k-i -r$ since
\[
r+k+i \leq n -\ell + k+\frac{k-1}{2} < \frac{2n-2\ell+3(\ell - \frac{2n-1}{3})-1}{2} = \frac{\ell}{2}.
\] 
This means that we can identify the vertex $c_i$ as the unique (by (i) and (ii)) vertex at the correct distance from $c$ with a partial outward branch of size at least $\ell-k-i -r$, and the root of this partial branch is the root of the $i$-heavy branch in $T$. Moreover, by the minimality of the count used to select $C$, all other partial branches at $c_i$ must actually be present in their entirety; that is, they are isomorphic to the non-$i$-heavy branches at $c_i$ in $T$.
\end{proof}

\begin{lemma}\label{lemma:oneheavy}
If $T$ is a tree of diameter $k -1$ (where $k < \ell - \frac{2n-1}{3}$ is odd) and central vertex $c$, then $T$ is reconstructible from the subset of the $\ell$-deck consisting only of cards which contain a copy of $P_k$.
\end{lemma}
\begin{proof}
With $i=0$ in Lemma \ref{lemma:branchrecog}, we can recognise whether there is a branch at $c$ with at least $\ell-k$ vertices. Let us suppose there is, since we are otherwise done by Lemma \ref{lemma:allsmall} (or equivalently by setting $j=0$ and proceeding with the present proof). 

To reconstruct the heavy branch at $c:=c_0$, we construct a sequence of vertices $c_0,c_1,c_2,\dots$ to act as new ``centres" until the branches at some $c_j$ are all small enough for us to apply Lemma~\ref{lem:BH_counting}. 
For the first step, let $c_1$ be the root of the heavy branch, which is adjacent to $c_0$. Applying Lemma \ref{lemma:branchrecog} with $i=1$, we can recognise whether any neighbour of $c_0$ has a $1$-heavy outward branch. If not, then the branches at $c_1$ all have less than $\ell-k-1$ vertices and we terminate with $j = 1$. 
Else if there is such a 1-heavy outward branch, then it follows from statement (ii) of the lemma that it must be at $c_1$. In addition, statement (iv) allows us to determine all but the 1-heavy branch at $c_1$. 

Now set $c_2$ to be the vertex in the 1-heavy branch that is adjacent to $c_1$ and repeat the argument. In the $i$th step, we terminate if every branch at $c_i$ has weight less than $ \ell - k - i$, and otherwise we completely determine all but the $i$-heavy branch and proceed by setting $c_{i+1}$ to be the root of this branch. The case at hand is recognisable by Lemma \ref{lemma:branchrecog} and we can also reconstruct the smaller branches at each step provided $i \leq (k-1)/2$. To see that this condition is maintained, we note that our procedure builds a path in $T$ with one endvertex at $c$. Since each step increases the length of this path by 1 and the longest path in $T$ contains $k$ vertices, we can take at most $(k-1)/2$ steps before terminating. 

Suppose the process terminates at the $j$th step, where $j\leq (k-1)/2$.
The remainder of the argument closely follows the proof of Lemma \ref{lemma:allsmall}. Let $\mathcal{G}$ be the family of $n$-vertex trees with diameter $k - 1$, and $\mathcal{F}$ be the family of graphs that can be constructed as follows. Let $i \in \{0, \dots, j-1\}$, let $v_1, \dots, v_k$ be the vertices in a $P_k$ and let $u_1, \dots, u_{j-i}$ be the vertices in a (disjoint) $P_{j-i}$. A graph in $\mathcal{F}$ is formed by adding an edge from $u_1$ to $v_{\frac{k+1}{2} + i}$, and then attaching a rooted tree $S$ which is not an end-rooted path to the vertex $u_{j-i}$. The condition that the attached tree is not a path ensures that it is easy to identify $P_k$ and the added tree in any $\mathcal{F}$-graph. An example is given in Figure \ref{fig:change_center}.
\begin{figure}
    \centering
    \begin{subfigure}{0.5 \textwidth}
    \centering
    \scalebox{0.85}{\begin{tikzpicture}[xscale=0.7, yscale=0.6]
    \draw (-4,0) -- (4,0);
    \draw (2,0) -- (2,1.1);
    \draw (2,1.9) -- (2,2.3);
    \draw(2, 2.3) -- (2, 3);
    \draw [blue] plot [smooth cycle, tension=0.5] coordinates {(2,2.7) (1.5,3.3) (1.4,3.9) (1.8, 4.2) (2.2, 4.2) (2.6,3.9) (2.5,3.3)};
    \node [fill=none] at (2, 3.5) {\color{blue}$S$};
    \node [fill=none] at (2, 1.7) {$\vdots$};
	\draw [decorate,decoration={brace,amplitude=5pt},xshift=0pt,yshift=0pt] (4,-0.8) -- (-4,-0.8) node [black,midway,xshift=0pt,yshift=-14pt]{\footnotesize $P_k$};
    
    \draw node[style=vertex,label=below:$c$](c) at (0,0) {};
    \draw node[style=vertex,label=below:$c_1$](c1) at (1,0) {};
    \draw node[style=vertex,label=below:$c_2$](c2) at (2,0) {};
    \draw node[style=vertex,label=left:$c_3$](c3) at (2,0.7) {};
    \draw node[style=vertex,label=left:$c_j$](cj) at (2,2.3) {};
    \draw node[style=vertex] (root) at (2,3) {};
    \end{tikzpicture}}
    \end{subfigure}%
    \begin{subfigure}{0.5 \textwidth}
    \centering
    \scalebox{0.85}{\begin{tikzpicture}[xscale=0.7, yscale=0.6]
    \draw (-4,0) -- (4,0);
    \draw (0,0) -- (0,1.1);
    \draw (0,1.9) -- (0,2.3);
    \draw (0, 2.3) -- (0, 3);
    \draw [blue] plot [smooth cycle, tension=0.5] coordinates {(0,2.78) (-0.5,3.3) (-0.6,3.9) (-0.2, 4.2) (0.2, 4.2) (0.6,3.9) (0.5,3.3)};
    \node [fill=none] at (0, 3.5) {\color{blue}$S$};
    \node [fill=none] at (0, 1.7) {$\vdots$};
	\draw [decorate,decoration={brace,amplitude=5pt},xshift=0pt,yshift=0pt] (4,-0.8) -- (-4,-0.8) node [black,midway,xshift=0pt,yshift=-14pt]{\footnotesize $P_k$};
    
    \draw node[style=vertex,label=below:$c$](c) at (0,0) {};
    \draw node[style=vertex,label=left:$c_1$](c1) at (0,0.7) {};
    \draw node[style=vertex,label=left:$c_j$](cj) at (0,2.3) {};
    \draw node[style=vertex] (root) at (0,3) {};
    \end{tikzpicture}}
    \end{subfigure}
    \caption{Potential elements of $\mathcal{F}$ along with their `moving centres'.}
    \label{fig:change_center}
\end{figure}
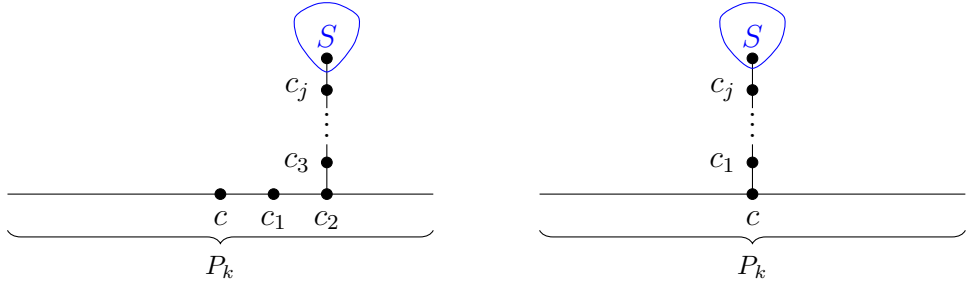

Each $\mathcal{F}$-subgraph of $G \in \mathcal{G}$ is contained in a unique maximal $\mathcal{F}$-subgraph, given by extending the tree attachment to the 
whole of the relevant branch at $u_{j-i}$. 
Applying Lemma~\ref{lem:BH_counting} allows us to determine the number of occurrences of each maximal $\mathcal{F}$-subgraph, as we did in the proof of Lemma~\ref{lemma:allsmall}.

At this point, each branch $B'$ has contributed one to the relevant count for each copy of $P_k$ which does not use $B'$, so we again need to determine the number of $P_k$ which use $B'$. 
This can be done using an identical argument to that in Lemma~\ref{lemma:allsmall} except replacing $c$ with $c_j$, replacing $P_{(k-1)/2+1} \frown S$ with $P_{(k-1)/2 + j  + 1} \frown S$ and suitably adjusting $S$.

We have now identified the total number of branches of each isomorphism class from vertices at distance $j$ from $c$ (except those which are end-rooted paths), although we do not know they are all branches at $c_j$. However, we have already reconstructed all of the tree except for the branches at $c_j$, so we can subtract the counts of all the appropriate branches not at $c_j$ from the total: the remainder must be attached at $c_j$. 

Finally, the end-rooted paths attached at $c_j$ can be reconstructed using the argument from the end proof of Lemma~\ref{lemma:allsmall}. 
\end{proof}

\lowdiam*

\begin{proof}
If $k$ is odd, then by Lemma~\ref{lemma:oneheavy} we can reconstruct any $n$-vertex tree with diameter $k-1$ using only the cards in its $\ell$-deck that contain a longest path provided $k < \ell - \frac{2n-1}{3}$, which is slightly better than the bound claimed in the theorem. The trees for which $k$ is even are then also reconstructible by Lemma~\ref{lem:parityreduction}, provided $k < \ell +1 - \frac{2(n+1)-1}{3} -1 = \ell-\frac{2n+1}{3}$.
\end{proof}

\section{Conclusion}\label{sec:end}
The example in Figure \ref{fig:counter_example} shows that the conjectured lower bound for reconstructing trees of $\floor{n/2} + 1$ is false for $n=13$, but the bound is still the best known for all other values of $n$. It may well be the case that the conjecture is asymptotically true, or even true exactly for large enough $n$.
\begin{problem}
Is there a function $\ell(n)=(1/2+o(1))n$ such that all $n$-vertex trees can be reconstructed from their $\ell(n)$-deck?
\end{problem}

For the problem of reconstructing the degree sequence, let $\ell=\ell(n)$ be the smallest integer such that the degree sequence of every $n$-vertex graph can be reconstructed from the $\ell$-deck. We have shown in Theorem \ref{thm:degree_small_cards} that $\ell(n)\le \sqrt{2n\log(2n)}+1$.  
It is easy to obtain a lower bound of form $\ell(n)=\Omega(\sqrt{\log n})$: indeed, each $\ell$-vertex graph appears at most $\binom{n}{\ell}$ times in the $\ell$-deck, so there are at most $(n^\ell)^{2^{\ell^2}}$ possible $\ell$-decks. There are  $\Omega(4^n/n)$ possible degree sequences as determined by Burns \cite{burns2007number}, and hence we need
$2^{\log_2(n)\ell 2^{\ell^2}}\geq 2^{2n-\log_2(n)}$,
which implies the bound.
By considering restricted graph classes, this can be slightly improved, but it would be interesting to see whether the {lower} bound can be improved to $n^\eps$ for some $\eps>0$.

In a different direction, it would be interesting to determine how large $\ell$ needs to be in order to recognise $k$-colourability of a graph on $n$ vertices from its $\ell$-deck. A special case of a result of Tutte \cite{horses} from 1979 states that the chromatic number of a graph is reconstructible when $\ell=n-1$, but nothing more is known in the direction of taking smaller cards. An interesting starting point would be to pinpoint the threshold for recognising whether a graph is bipartite (2-colourable).
In this case, a lower bound of $\lfloor n/2\rfloor$ follows from the example of Spinoza and West \cite{SW19} mentioned in the introduction (consider a path and the disjoint union of an odd cycle and a path). Manvel \cite{Manvel74} proved that the $(n-2)$-deck suffices, but it seems likely
that it should be possible to determine bipartiteness when a linear number of vertices are removed.  More generally, for fixed $k$, it may even be true that $k$-colourability is recognisable from the $cn$-deck for some $c=c(k)<1$.

\section*{Acknowledgements}
We would like to thank Doug West and the anonymous reviewer for their helpful suggestions.

\bibliography{reconstruction_edited}
\bibliographystyle{scott_edited}

\end{document}